\documentclass[a4paper,11pt]{article}
\usepackage[margin=1in]{geometry}
\usepackage{amsmath,amsthm,amssymb}
\usepackage{tikz,enumitem}
\usepackage{comment}
\tikzset{main node/.style={circle,fill=blue!20,draw,minimum size=0.7cm,inner sep=0pt},
}

\usepackage{hyperref}
\hypersetup{
  colorlinks=true,
  citecolor=blue,
  linkcolor=black,
  filecolor=magenta,
  urlcolor=cyan
}
\usepackage{xcolor}

\newtheorem{theorem}{Theorem}[section]
\newtheorem{problem}[theorem]{Problem}

\newtheorem{corollary}[theorem]{Corollary}

\newtheorem{lemma}[theorem]{Lemma}

\newtheorem{claim}[theorem]{Claim}

\newtheoremstyle{case}{}{}{}{}{}{:}{ }{}
\theoremstyle{case}

\def\inst#1{$^{#1}$}

\newcommand{\thecolor}{black} 

\title{Structural description of (bull, house)-free graphs}

\author{
  Manoj Belavadi\inst{1}\footnote{email: \href{mailto:mbelavadi@isichennai.res.in}{mbelavadi@isichennai.res.in}}
  \and 
  Ch\'inh~T.~Ho\`ang\inst{2}\footnote{email: \href{mailto:choang@wlu.ca}{choang@wlu.ca}}
  }
\date{\today}
\begin{document}
\maketitle
	\begin{center}
		{\footnotesize 
			
			\inst{1} Indian Statistical Institute, Chennai Centre, Chennai 600029, India. 
			
			\inst{2} Department of Computer Science and Physics, Wilfrid Laurier University, \\ Waterloo, ON, Canada, N2L 3C5. 
    }
		
	\end{center}

\begin{abstract}
The bull is a graph consisting of a triangle and two pendant edges. The $P_5$ is the chordless path on five vertices. The house is the complement of a $P_5$. A graph is $k$-critical if it is $k$-chromatic but each of its proper induced subgraphs is $(k-1)$-colorable. It is known that the number of $k$-critical $P_5$-free graphs and bull-free graphs are infinite for large enough $k$.  We give a structural description of (bull, house)-free graphs and also (bull, $P_5$)-free graphs. Using these structural properties we prove that for any fixed $k$, the number of  $k$-critical (bull, $P_5$)-free graphs is finite. This improves on a result of Huang, Li and Xia [Critical ($P_5$, bull)-free graphs, Discrete Applied Mathematics 334 (2023) 15–25], and answers an open question of Beaton and Cameron [Vertex-critical graphs in subfamilies of ($P_4$+$\ell P_1$)-free graphs, arXiv:2604.06999v1 (2026)]. A graph $G$ is perfectly divisible if for each induced subgraph $H$ of $G$ with at least one edge, $V(H)$ can be partitioned into two sets $V_1, V_2$ such that every largest clique of $H$ contains a vertex in $V_i$ for $i = 1,2$. Chudnovsky and Sivaraman proved that ($P_5$, bull)-free graphs are perfectly divisible [Perfect divisibility and 2-divisibility, Journal of Graph Theory 90 (2019) 54–60]. Our structural result allows us to give a short proof of this theorem.
\end{abstract}
\section{Introduction}\label{sec:intro}
Graph coloring is a well known problem in computer science and
discrete mathematics. A coloring of a graph is an assignment of ``colors'' to its vertices so that each vertex receives one color, and two vertices are assigned different colors if they are adjacent.  If a graph $G$ can be colored with $k$ colors then we say that $G$ is $k$-{\it colorable}. The Graph Coloring problem is the problem of coloring a graph with the smallest number $k$ of colors; this number $k$ is the {\it chromatic number} of $G$ and denoted by $\chi(G)$.
A graph $G$ is $k$-{\it critical} if $\chi(G) = k$ but each of its proper induced subgraphs is $(k - 1)$-colorable. If a graph is not $k$-colorable then it contains an induced subgraph that is $k$-critical. Thus, the structure of $k$-critical graphs of a graph class ${\cal C}$ may explain why certain graphs in ${\cal C}$ are $k$-colorable. This line of research is encapsulated by the following problem. Following \cite{HuaLi2023}, we call it the Finiteness Problem.
\begin{problem}[Finiteness Problem]
    Given a \textcolor{\thecolor}{hereditary} class ${\cal C}$ of graphs and an integer $k$, are there a finite number of $k$-critical graphs in ${\cal C}$?
\end{problem}
The Finiteness Problem has an interesting consequence. If the answer to it is ``yes'' for a class ${\cal C}$ and an integer $k$, then there is a polynomial-time algorithm to decide if a graph in ${\cal C}$ is $(k-1)$-colorable. The purpose of this paper is to investigate the Finiteness Problem for $P_5$-free graphs. To do that, we will need to introduce a few definitions. 

\subsection{Definitions}
Let $G$ be a graph. Then $\overline{G}$ denotes the complement of $G$. We may also refer to $\overline{G}$ as co-$G$. A graph is {\it co}-bipartite if its complement is bipartite. $G$ is {\it co-connected} if $\overline{G}$ is connected.  If $A$ is a set of vertices of $V(G)$, then $G[A]$ denotes the subgraph of $G$ induced by $A$. For two vertices $a,b \in V(G)$, the {\it distance} from $a$ to $b$ is the number of edges in a shortest path from $a$ to $b$ (if such a path exists). A set $A \subset V(G)$ is {\it complete} (respectively, {\it anti-complete}) to $B \subset V(G)$ if there are all (respectively, no) edges between $A$ and $B$. A set $H$ of vertices of $V(G)$ is {\it homogeneous} if $1 < |H| <|V(G)|$ and every vertex in $V(G) \setminus H$ is complete or anti-complete to $H$.  A graph is {\it prime} if it does not contain a homogeneous set. Note that prime graphs are connected and co-connected. For a vertex $v \in V(G)$, $N(v)$ denotes the set of vertices adjacent to $v$ in $G$. For a set $X \subseteq V(G)$, $N(X)$ denotes the set of vertices in $V(G) \setminus X$ that are adjacent to some vertex in $X$.

$P_k$ (respectively, $C_k$) denotes the chordless path (respectively, cycle) with $k$ vertices. Let ${\cal F}$ be a set of graphs, we say that $G$ is ${\cal F}$-free if no induced subgraph of $G$ is isomorphic to a graph in ${\cal F}$. A {\it hole} is the graph $C_k$ with $k \geq 5$. A hole is {\it odd} if it has an odd number of vertices. 
If $A$ and $B$ are two disjoint graphs, then $A + B$ denotes the union of $A$ and $B$. A {\it bull} (see Figure \ref{fig:5-vertex-graphs}) is the graph obtained from a triangle with two disjoint pendant edges.  The bull has been much studied. 
\textcolor{\thecolor}{
We let $\omega(G)$ denote the number of vertices in a largest clique of $G$. 
}
A graph $G$ is  {\it perfectly divisible }if for all induced subgraphs $H$ of $G$ with at least one edge, $V(H)$ can be partitioned into two sets $A,B$ such that $\omega(G[A]) < \omega(G)$ and $\omega(G[B]) < \omega(G)$. A graph $G$ is {\it perfect} if each induced subgraph $H$ of $G$ satisfies $\chi(H) = \omega(H)$. Note that the only $k$-critical perfect graph is the clique on $k$ vertices. Chudnovsky, Robertson, Seymour, and Thomas \cite{ChuRob2006} proved that a graph is perfect if and only if it does not contain an induced subgraph isomorphic to an odd hole or its complement. 

\subsection{Background and results}

Recently, much research has been done on determining the $k$-critical graphs of the class of $P_5$-free graphs.  Finding the chromatic number of a $P_5$-free graph is
NP-hard \cite{KraKra2001}, but for every fixed $k$, the problem of coloring a graph with $k$ colors admits a polynomial-time
algorithm \cite{HoaKam2008, HoaKam2010}. Bruce, Ho\`ang and Sawada \cite{BruHoa2009} showed that the number of $4$-critical $P_5$-free graphs is finite. However, for any $k \geq 5$, the number of $5$-critical $P_5$-free graphs is infinite \cite{HoaMoo2015, HoaMoo2013}. Because of this result, many researchers have investigated the Finiteness Problem of $k$-critical $(P_5, H)$-free graphs, where $H$ is a certain graph. Dhaliwal et al.  \cite{DhaHam2017} showed that for each $k$, the number of $k$-critical ($P_5, \overline{P}_5$)-free graphs is finite (see also \cite{HoaLaz2015}). Cameron and Ho\`ang \cite{CamHoa2024} showed that the number of $k$-critical ($P_5, C_5$)-free graphs, for $k \geq 5$ is infinite. For a 5-vertex graph $H$, we note the work in \cite{CaiGoe2023} for ($P_5$, gem)-free graphs and $(P_5, \overline{P_2 + P_3})$-\textcolor{blue}{free} graphs, in \cite{HuaLi2023} for ($P_5$, chair)-free graphs for $k=5$. We are particularly interested in the case $H$ is the bull graph.  The bull has been much studied. Chudnovsky and Safra \cite{ChuSaf2008} showed that a bull-free graph on $n$ vertices has $\omega(G) \geq n^{\frac{1}{4}}$ or $\omega(\overline{G}) \geq n^{\frac{1}{4}}$; thus solving the well known Erd\H os–Hajnal conjecture (\cite{ErdHaj1989}) for bull-free graphs. Chudnovsky and Sivaraman \cite{ChuSiv2019} proved that ($P_5$, bull)-free graphs are perfectly divisible. In this context, we mention the following two conjectures of Ho\`ang: (i) Odd hole-free graphs are perfectly divisible \cite{Hoa2026}, and (ii) $P_5$-free graphs are perfectly divisible \cite{Hoa2018}.

\begin{figure}
\newcommand{\ang}{17}
\newcommand{\sep}{15}
\begin{tikzpicture}[scale=1]
	\tikzstyle{every node}=[font=\small]
\newcommand{\size}{1}
\newcommand{\offset}{2}

\newcommand{\bull}{1}{
	\path ( \size * 2 + \offset  ,0) coordinate (g1);
	\path (g1) +(\size , \size ) node (g1_1){}; 
	\path (g1) +(\size * 2,  \size) node	(g1_2){}; %
	\path (g1) +(\size , \size * 2.0 ) node (g1_3){}; 
    \path (g1) +(\size * 2,  \size *2) node	(g1_5){};
	  \path (g1) +(1.5 * \size, 0) node (g1_4){}; 
	 
	\foreach \Point in {(g1_1),(g1_2),(g1_3),(g1_4), (g1_5)}{
		\node at \Point {\textbullet};
	}
	\draw[thick]  (g1_1) -- (g1_2) (g1_1) -- (g1_4) (g1_1) -- (g1_3)
	(g1_2) -- (g1_5) (g1_2) -- (g1_4) 
	;
	\path (g1) ++(\size * 1.5  ,-\size ) node[draw=none,fill=none] { {\large Bull}};
}

\newcommand{\g3}{3}{
	\path( 8 * \size, 0) coordinate(g3);
	\path(g3) + (\size , 0) node(g3_1){}; 
	\path(g3) +(\size, \size) node(g3_2){};
	\path(g3) +(  \size, \size * 2) node(g3_3){};
	\path(g3) +(\size *2, \size) node(g3_4){};
	\path(g3) +(2* \size, 0) node(g3_5){};
	
	\foreach \Point in {(g3_1),(g3_2),(g3_3),(g3_4), (g3_5)}{
		\node at \Point {\textbullet};
	}
	\draw[thick]   (g3_1) -- (g3_2)
	(g3_2) -- (g3_3)
	(g3_4) -- (g3_2)
	(g3_4) -- (g3_5);
	\path (g3) ++(\size  *1.5 ,-\size ) node[draw=none,fill=none] { {\large Chair}};
	
}

\newcommand{\g4}{4} {
	\path( 13 * \size, 0) coordinate(g4);
	\path(g4) + (0,0)  node(g4_1){}; 
	\path(g4) + ( \size , 0) node(g4_2){};  
	\path(g4) + ( \size * 2 , 0) node(g4_3){};  
	\path(g4) + ( \size * 3 , 0) node(g4_4){};  
	\path(g4) + ( \size * 1.5 , \size) node(g4_5){};  
	
	\foreach \Point in {(g4_1),(g4_2), (g4_3), (g4_4), (g4_5)}{
		\node at \Point {\textbullet};
	}
	\draw   (g4_1) -- (g4_2)
	(g4_2) -- (g4_3) 
	(g4_2) -- (g4_4) 
	(g4_3) -- (g4_5) 
	(g4_4) -- (g4_5) (g4_1) -- (g4_5) (g4_2) -- (g4_5);
	\path (g4) ++(\size  * 1.5,-\size ) node[draw=none,fill=none] { {\large Gem}};
}

 \end{tikzpicture}
\caption{Bull, Chair and Gem}\label{fig:5-vertex-graphs}
\end{figure}
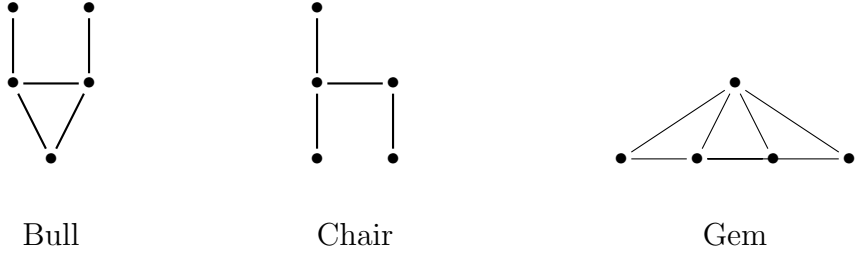

Huang, Li, and Xia \cite{HuaLi2023b} showed that there are finitely many 5-vertex-critical ($P_5$, bull)-free graphs. It is this result that motivates us to study the structure of ($P_5$, bull)-free graphs. Our result implies the following

\begin{theorem}\label{thm:critical}
    For every $k \geq 1$, the number \textcolor{blue}{of} $k$-critical ($P_5$, bull)-free graphs is finite. 
\end{theorem}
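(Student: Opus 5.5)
We argue by reduction to prime graphs and then use a structural description of prime ($P_5$, bull)-free graphs. The first step is the standard observation about homogeneous sets in $k$-critical graphs. Let $G$ be $k$-critical and let $H$ be a homogeneous set of $G$. By criticality, $G$ is connected and co-connected, since a disjoint union or a join of two graphs is critical only if each part is. Now contract $H$ to a clique whose size is $\chi(G[H])$. Since every outside vertex is complete or anticomplete to $H$, a coloring of the resulting graph lifts to a coloring of $G$ with the same number of colors, and conversely. Criticality then forces $G[H]$ to be a clique. Moreover, vertices of $H$ are then true twins, so $H$ is a clique module.

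So every $k$-critical graph $G$ is obtained from a prime graph $G'$ by substituting cliques of bounded size (at most $k$) for vertices. Because the class is hereditary, $G'$ is an induced subgraph of $G$ and is still ($P_5$, bull)-free. It therefore suffices to show that a $k$-critical ($P_5$, bull)-free graph has at most $f(k)$ vertices, and this reduces to bounding the number of vertices of the prime quotient $G'$ when $\omega(G) < k$.

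The core step uses the structural theorem for (bull, house)-free graphs and its $P_5$-analogue. I expect it to say roughly that a prime ($P_5$, bull)-free graph is either perfect (for instance, $C_5$-free), or is built around a $C_5$ whose neighbourhood classes have a very restricted form: each class is a clique or a stable set, with controlled adjacencies between classes.
\begin{itemize}
\item If $G$ is perfect and $k$-critical, then $G = K_k$.
\item Otherwise $G$ contains an induced $C_5$ $C$. Using $P_5$-freeness, every vertex lies at distance at most two from $C$. Partition the vertices by their attachment type on $C$.
\end{itemize}
Bull-freeness and $P_5$-freeness should force each attachment class to be a clique (hence of size less than $k$) or a stable set whose members are twins up to a bounded number of exceptions. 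Criticality excludes comparable vertices: in a $k$-critical graph no vertex $u$ has $N(u) \subseteq N(v)$ with $u \neq v$ nonadjacent. Thus large stable classes of twins are forbidden. Together these give a bound on $|V(G)|$ that is polynomial in $k$.

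The main obstacle is this last step: showing that within the structured prime graph, every large stable-set-like class must contain a dominated pair. Without such a pair, one would need to construct an induced $P_5$ or bull. I would handle it by working class by class around the $C_5$, and by invoking the paper's structural lemmas about how vertices at distance two attach to the first neighbourhood.
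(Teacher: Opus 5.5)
Your proof has a genuine gap at the decisive step, and the reduction step before it is also false as stated.

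\textbf{The core step is missing.} The structure of prime ($P_5$, bull)-free graphs is only guessed (``I expect'', ``should force''). The obstacle you name at the end is where all the content of the proof lies. The missing idea is the dichotomy of Corollary~\ref{cor:bull-P5-free}: a prime ($P_5$, bull)-free graph is bipartite or $3K_1$-free. This is the complement of Theorem~\ref{thm:(bull,house)-free}, which says a prime (bull, house)-free graph is triangle-free or co-bipartite. With this dichotomy no analysis of attachments to a $C_5$ is needed:
\begin{itemize}
\item blowups of bipartite graphs are perfect, so their only $k$-critical blowup is $K_k$;
\item blowups of $3K_1$-free graphs are $3K_1$-free, so a $k$-critical one has fewer than $R(3,k+1)$ vertices.
\end{itemize}
Your proposed substitute is also shaky as sketched. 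You want to bound the prime quotient $G'$, but $G'$ has no twins at all by primality. Moreover, the standard exchange argument for a domination $N_{G'}(u)\subseteq N_{G'}(v)$ between nonadjacent $u,v$ only works in the blowup when the clique substituted for $u$ is no larger than the one for $v$. So ``no dominated pairs'' does not by itself bound the stable classes.

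\textbf{The reduction is wrong.} A $k$-critical graph need not be co-connected: the wheel $C_5\vee K_1$ is $4$-critical and ($P_5$, bull)-free. A homogeneous set $H$ also need not be a clique. Criticality only forces $G[H]$ to be $\chi(G[H])$-critical, because the clique $K_{\chi(G[H])}$ you substitute is not an induced subgraph of $G$ unless $\omega(G[H])=\chi(G[H])$, so there is no contradiction. For instance, in the $5$-cycle $a_1a_2a_3a_4a_5$, replace $a_1$ by a $C_5$ and $a_3$ by a triangle. The result is connected, co-connected, ($P_5$, bull)-free and $5$-critical, and it has a homogeneous set inducing $C_5$. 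So $G$ is not in general a clique blowup of a prime graph.

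What is true (Lemma~\ref{lem:clq-sklton}) is that the clique skeleton of a co-connected $k$-critical graph is $k$-critical. This bounds the number of maximal modules. Each module is $b$-critical for some $b<k$, so its size must be bounded by induction on $k$, and joins are handled by the same induction (Theorem~\ref{thm:clq-sklton}). Your sketch omits this induction entirely.
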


On the way to proving Theorem~\ref{thm:critical}, we will obtain a new proof of the result of Chudnovsky and Sivaraman on ($P_5$, bull)-free graphs. It will be more convenient for us to describe the structure of the complements of $P_5$-free graphs.  A {\it house} is the complement of the $P_5$. A {\it triangle} is the clique on three vertices.

\begin{theorem}\label{thm:(bull,house)-free}
    Let $G$ be a connected (bull, house)-free graph, then
    \begin{itemize}[noitemsep, topsep=0pt]
        \item[$(i)$] $G$ contains a homogeneous set, or
        \item[$(ii)$] $G$ is triangle-free, or
        \item[$(iii)$] $G$ is co-bipartite.
    \end{itemize}
\end{theorem}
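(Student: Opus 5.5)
We argue by contradiction. Suppose $G$ is a connected (bull, house)-free graph that is prime (no homogeneous set), contains a triangle, and is not co-bipartite; we derive a contradiction. Since $G$ is prime it is co-connected, and $\overline{G}$ is a ($P_5$, bull)-free graph (the bull is self-complementary). Throughout we use the two forbidden subgraphs in $G$ directly. A vertex outside a triangle that sees exactly one of its vertices, together with a further pendant, tends to create a bull. A vertex adjacent to exactly two nonadjacent vertices at distance two in a $C_4$-like configuration tends to create a house. The plan is to take a maximal clique-like structure containing the triangle and show that primality forces either this structure to be homogeneous or the whole graph to be covered by two cliques.

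\textbf{Step 1: find a $C_5$ or an anticonnected piece.} First I would show that if $G$ contains a triangle and is prime, then every vertex lies in a triangle or $G$ has a particular shape. The cleaner route is to start from a largest clique $K$ (of size at least 3) and study how the outside vertices attach to it. For $v\notin K$, bull-freeness restricts the pattern $N(v)\cap K$: if $v$ has a neighbor and a non-neighbor in $K$, then any vertex $u$ adjacent to $v$ but with different $K$-adjacency creates a bull or a house. I would classify outside vertices into those complete to $K$ minus one vertex, those with partial attachments, and those anticomplete to $K$. I would then use house-freeness: two nonadjacent outside vertices $x,y$ with $x$ seeing $a\in K$ and $y$ seeing $b\in K$ but not crosswise give a $C_4$ $x a b y$ only if $xy$ is an edge. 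Together with a third vertex, this gives a house. The goal of this step is to show that the vertices anticomplete to $K$ must be absent in the prime case. Otherwise, take a component $C$ of the anticomplete part and its neighborhood. Then $C$, or a set assembled from the attachments, is homogeneous, since a vertex of $N(C)$ that distinguishes $C$ along an edge of $C$ yields a $P_3$ plus an attachment to $K$, which forms a bull with a triangle of $K$.

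\textbf{Step 2: cover by two cliques.} Once every vertex has a neighbor in $K$, I would show that $V(G)\setminus K$ is a clique, or can be partitioned so that $V(G)$ splits into two cliques. Here the house is the key obstruction. Two nonadjacent vertices $x,y\notin K$ with distinct non-neighbors in $K$ form, with two vertices of $K$, a $C_4$; adding a third vertex of $K$ (adjacent to both ends of the $C_4$ edge inside $K$) gives a house unless the adjacencies are forced. Iterating this, the non-neighborhood pattern gives an equivalence: if $\overline{G}$ were not bipartite, the structure yields an odd antihole, which I would rule out. Concretely, a $C_5$ is self-complementary; in a prime graph an induced $C_5$ together with a triangle should force a bull or a house. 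Longer odd antiholes contain a house. So $\overline{G}$ has no odd cycle other than triangles. A triangle in $\overline{G}$ is a stable set of size 3 in $G$, and I would show it cannot coexist with the structure around $K$ without a homogeneous set. This gives co-bipartiteness.

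\textbf{Main obstacle.} I expect the hardest part to be the homogeneous-set extraction in Step 1: choosing precisely which set is homogeneous when attachments to $K$ are partial. My first attempt would be to take a maximal set $M\supseteq K$ such that $G[M]$ is co-connected with the property that no vertex outside $M$ is mixed on an edge of $\overline{G}[M]$. I would then show, by the bull/house case checks, that any mixed vertex can be added to $M$ while preserving the property. Hence either $M=V(G)$, which forces co-bipartiteness via Step 2, or $M$ is homogeneous. If $G$ has no triangle at all we are in case $(ii)$, so all this analysis takes place under the triangle assumption.
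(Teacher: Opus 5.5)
\textbf{Step~1 does not work.} You argue that in the prime case no vertex is anticomplete to a largest clique $K$, but your argument never uses the assumption that $G$ is not co-bipartite, and without that assumption the claim is false. Take cliques $A=\{a_1,a_2,a_3\}$ and $B=\{b_1,b_2,b_3\}$ with $a_ib_j\in E(G)$ iff $i>j$.
\begin{itemize}
\item $\overline{G}$ is a half-graph: bipartite with nested neighbourhoods. Hence it is bull-free and $2K_2$-free, so $P_5$-free, and therefore $G$ is (bull, house)-free.
\item $G$ is connected, contains the triangle $A$, and one checks directly that it is prime.
\item $\overline{G}$ has the perfect matching $\{a_ib_i\}$, so $\omega(G)=3$ and $A$ is a largest clique. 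Yet $b_3$ is anticomplete to $A$.
\end{itemize}
The anticomplete part is the singleton $\{b_3\}$. Your mechanism (a vertex of $N(C)$ distinguishing an edge of $C$) therefore never applies, and no homogeneous set is produced. This is not an accident of the example: in the paper's final configuration, the simplicial vertex $r_1$ is anticomplete to the clique $K\cup\{v\}$. Any correct version of Step~1 would have to use non-co-bipartiteness essentially, and you give no indication how.

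\textbf{Step~2 only restates the theorem.} You correctly reduce the goal to showing that $G$ is $C_5$-free and $3K_1$-free (odd antiholes on at least $7$ vertices contain a house). But both halves are only asserted.

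The $C_5$ half cannot come from a local bull/house check: the $5$-wheel is (bull, house)-free and contains a $C_5$ and triangles. It needs a global construction of a homogeneous set, which is what the paper's hole lemma supplies. There, the set $U$ of vertices complete to the hole is shown to be complete to the component of $G-U$ containing the hole. When $U=\emptyset$, every vertex seeing two consecutive hole vertices sees exactly three consecutive ones. Then either a component of $A_2$ is homogeneous, or $N(x_2)$ is disconnected and the split-vertex lemma applies.

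The $3K_1$ half is the heart of the theorem, and ``cannot coexist with the structure around $K$'' has no argument behind it. The paper obtains it as follows:
\begin{itemize}
\item it also excludes dominoes;
\item it applies the Ho\`ang--Khouzam theorem on HHD-free graphs to get a simplicial vertex $v$;
\item it analyses $K=N(v)$, $W=N(K)\setminus\{v\}$ and $R$: neighbourhoods are nested, every vertex of $W$ has a neighbour in $R$, $G[W]$ is connected, and $R$ is a single simplicial vertex.
\end{itemize}
This yields a cover of $V(G)$ by the cliques $K\cup\{v\}$ and $W\cup R$.

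\textbf{Your ``main obstacle'' device does not replace this.} First, $G[K]$ is not co-connected when $|K|\ge 2$. Second, a set $M$ with $\overline{G}[M]$ connected and no outside vertex mixed on a co-edge of $M$ is already a module, so there are no mixed vertices to add. Third, $M=V(G)$ says nothing about co-bipartiteness.

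\textbf{Minor error in Step~2.} Nonadjacent $x,y$ with crossing non-neighbours $a,b\in K$ give an induced $P_4$, not a $C_4$. A house with its roof in $K$requires $xy\in E(G)$.
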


Karthick, Maffray, and Pastor \cite{KarMaf2019} proved the following
\begin{theorem}[Theorem 5.2 in \cite{KarMaf2019}]\label{thm:KarMaf}
    Let $G$ be a connected (bull, house)-free graph, then
    \begin{itemize}[noitemsep, topsep=0pt]
        \item[$(i)$] $G$ contains a homogeneous set, or
        \item[$(ii)$] $G$ is triangle-free, or
        \item[$(iii)$] $G$ is ($P_5, C_5$)-free.
    \end{itemize}
\end{theorem}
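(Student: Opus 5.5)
The plan is to derive Theorem~\ref{thm:KarMaf} directly from Theorem~\ref{thm:(bull,house)-free}, which we take as given. Outcomes $(i)$ and $(ii)$ of the two theorems coincide. So it suffices to show that outcome $(iii)$ of Theorem~\ref{thm:(bull,house)-free} implies outcome $(iii)$ of Theorem~\ref{thm:KarMaf}, namely that every co-bipartite graph is $(P_5, C_5)$-free.

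Let $G$ be co-bipartite, so $\overline{G}$ is bipartite. Since being bipartite is hereditary, for every $A \subseteq V(G)$ the complement of $G[A]$, which is $\overline{G}[A]$, is also bipartite.
\begin{itemize}[noitemsep, topsep=0pt]
\item[] \emph{$G$ is $P_5$-free.} If $G[A]$ were isomorphic to $P_5$, then $\overline{G}[A]$ would be isomorphic to $\overline{P_5}$, the house. The house contains a triangle, so it is not bipartite, a contradiction.
\item[] \emph{$G$ is $C_5$-free.} The graph $C_5$ is self-complementary. So if $G[A]$ were a $C_5$, then $\overline{G}[A]$ would be a $C_5$, which is an odd cycle and hence not bipartite, a contradiction.
\end{itemize}
Therefore every co-bipartite graph is $(P_5, C_5)$-free, and Theorem~\ref{thm:KarMaf} follows.

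There is essentially no obstacle in this reduction; the only things to check are the two complement identities above. The real work lies in Theorem~\ref{thm:(bull,house)-free} itself.

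If one wanted a proof of Theorem~\ref{thm:KarMaf} independent of that theorem, I would argue as follows. Take a prime (bull, house)-free graph $G$ containing a triangle, and assume it contains an induced $P_5$ or $C_5$. Consider the vertices outside a fixed $C_5$ (or $P_5$) and classify them by their neighbourhoods on it, using bull- and house-freeness to restrict the possible attachment types. Then show that the triangle, together with primeness (the absence of homogeneous sets), forces a bull or a house. I expect this neighbourhood case analysis to be the main difficulty in such a direct approach. Via Theorem~\ref{thm:(bull,house)-free}, however, it is unnecessary.
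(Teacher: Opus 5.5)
Your reduction is correct and matches what the paper intends. The paper does not reprove this result; it quotes it from Karthick, Maffray and Pastor and remarks that Theorem~\ref{thm:(bull,house)-free} is a stronger description. Your observation is exactly what justifies that remark: a co-bipartite graph is $(P_5, C_5)$-free, because the house contains a triangle and $C_5$ is a self-complementary odd cycle. There is no circularity, since the proof of Theorem~\ref{thm:(bull,house)-free} uses only Lemmas~\ref{lem:split-vertex}, \ref{lem:hole}, \ref{lem:domino} and Theorem~\ref{thm:HHD-free}, never Theorem~\ref{thm:KarMaf}.
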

Theorem \ref{thm:KarMaf} combined with the result in \cite{ChvHoa1987} also implies Theorem \ref{thm:critical}. However, our Theorem \ref{thm:(bull,house)-free} gives a stronger description of the structure of (bull, house)-free graphs.

In section \ref{sec:structure}, we will prove Theorem \ref{thm:(bull,house)-free}. In section \ref{sec:critical}, 
we will prove Theorem \ref{thm:critical}. In section \ref{sec:divisibility}, we give a new and short proof of the result of \cite{ChuSiv2019}. In section \ref{sec:conclusions}, we give a conclusion and discuss open problems related to our work.  


\section{Structure of (bull, house)-free graphs}\label{sec:structure}                                                     
In this section, we give a proof of Theorem \ref{thm:(bull,house)-free}.
We write $x \sim y$ to mean $x$ is adjacent to $y$, and $x \not\sim y$ to mean $x$ is not adjacent to $y$
\begin{lemma}\label{lem:split-vertex}
    Let $G$ be a connected (bull, house)-free graph. Suppose $v$ is a vertex of $G$, such that $G[N(v)]$ has two components, then $G$ is triangle-free or $G$ has a  homogeneous set \textcolor{\thecolor}{$H$ such that $G[H]$ is connected}.
\end{lemma}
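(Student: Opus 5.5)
The plan is to let $A$ and $B$ be the vertex sets of the two components of $G[N(v)]$ and set $M = V(G)\setminus (N(v)\cup\{v\})$. Since $A$ and $B$ are distinct components, $A$ is anticomplete to $B$. The first goal is to show that each of $A$ and $B$ is a homogeneous set whenever it has at least two vertices. Such a set is automatically connected, being a component, and $1<|A|<|V(G)|$ holds because $v\notin A$.

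\textbf{Step 1.} Suppose $|A|\ge 2$ and that some vertex $x\notin A$ is mixed on $A$. Vertices of $B$ and $v$ itself are not mixed on $A$, so $x\in M$. Because $G[A]$ is connected, we can pick an edge $a_1a_2$ in $A$ with $x\sim a_1$ and $x\not\sim a_2$. Fix any $b\in B$.
\begin{itemize}
\item If $x\not\sim b$, then $\{a_1,a_2,v,x,b\}$ induces a bull. The triangle is $a_1a_2v$, with pendant $x$ at $a_1$ and pendant $b$ at $v$. This uses $x\not\sim v$, $x\not\sim a_2$, and $b\not\sim a_1,a_2$.
\item If $x\sim b$, then $x a_1 v b$ is an induced $C_4$. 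Adding $a_2$, which is adjacent exactly to $a_1$ and $v$, gives an induced house.
\end{itemize}
Either way we reach a contradiction, so $A$ is homogeneous. The same argument applies to $B$, and the lemma follows unless $|A|=|B|=1$.

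\textbf{Step 2.} The remaining case is $N(v)=\{a,b\}$ with $a\not\sim b$, and we may assume $G$ contains a triangle. Here we must produce a connected homogeneous set. I would argue by contradiction: assume $G$ has no connected homogeneous set, in particular no homogeneous set that is a clique or the vertex set of a component of some $G[N(u)]$.
\begin{itemize}
\item By Step 1 applied to every vertex $u$, any vertex whose neighbourhood has exactly two components has both components singletons.
\item Choose a triangle $T$ at minimum distance from $v$, together with a shortest path $P$ from $v$ to $T$.
\item Analyse the vertex $t$ where $P$ meets $T$, along with its predecessor $p$ on $P$. By minimality, $p$ sees at most one vertex of $T$. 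The two other vertices $t',t''$ of $T$, together with $p$ and a further predecessor of $p$ (or a vertex of $\{a,b\}$ when the path is short), should yield a bull unless $t'$ and $t''$ have identical neighbourhoods outside $\{t',t''\}$.
\item When $t'$ and $t''$ are such twins, $\{t',t''\}$ is a connected homogeneous set, since they are adjacent.
\end{itemize}
More generally, I would try to grow a maximal set $H$ of vertices containing $t'$ with the same neighbourhood outside it, and use house-freeness to rule out a vertex that is mixed on it.

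\textbf{Main obstacle.} Step 2 is where I expect the difficulty. House-freeness does not forbid long induced paths, so the triangle may lie far from $v$, and the degree-$2$ hypothesis at $v$ gives little leverage there. If the direct attempt fails, I would instead look at the neighbourhood of the attachment vertex $t$. Either it has at least two components, and then Step 1 applied at $t$ gives a homogeneous set. Or it is connected, and contains the edge $t't''$ and the vertex $p$, which has at most one neighbour in $T$; I would then look for a mixed vertex producing a bull or house.
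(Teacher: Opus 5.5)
Your Step 1 is correct and is exactly the base case of the paper's proof. The gap is Step 2. It is only a plan, and its central claim is false. You assert that around a closest triangle $T=\{t,t',t''\}$ one should get a bull unless $t'$ and $t''$ are twins. Here is a counterexample. Take vertices $v,a,b,t,c_1,c_2,c_3$ with edges $va,vb,at,bt$, $tc_1,tc_2,tc_3$, $c_1c_2,c_2c_3$.
\begin{itemize}
\item This graph is connected and (bull, house)-free, and $N(v)=\{a,b\}$ is stable.
\item Its only triangles are $tc_1c_2$ and $tc_2c_3$.
\item No two adjacent vertices have the same neighbourhood outside the pair.
\item The connected homogeneous set here is $\{c_1,c_2,c_3\}$, an entire component of $G[N(t)]$.
\end{itemize}
So aiming for a twin pair cannot work. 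Your fallback also leaves the case ``$G[N(t)]$ connected'' open, with no argument for it.

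The missing observation is that this case never occurs. Since $N(v)$ is stable, $v$ lies in no triangle, so $t\neq v$. Let $p$ be the predecessor of $t$ on a shortest $v$--$t$ path. By minimality, $p$ lies in no triangle, since such a triangle would be closer to $v$ than $T$. Hence $p$ has no neighbour in $N(t)$. So $\{p\}$ is a component of $G[N(t)]$, and the edge $t't''$ lies in a different component $A$.

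Your Step 1 argument now applies verbatim at $t$, with $p$ in the role of $b$; it works for any number of components $\ge 2$. Let $x$ be a vertex mixed on $A$ along an edge $a_1a_2$. Then $\{a_1,a_2,t,x,p\}$ induces a bull if $x\not\sim p$ and a house if $x\sim p$. So $A$ is a connected homogeneous set. Note that $|A|\ge 2$ and $t\notin A$.

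This is exactly the paper's proof. There it is organised as an induction over the distance layers $N_i$ from $v$, showing that every vertex has a stable neighbourhood. For a vertex $u$, a neighbour $u'$ of $u$ in the previous layer has no neighbour in $N(u)$ by the induction hypothesis. It therefore serves as the anchor, just as $b$ does in your Step 1.
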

\begin{proof}
    Assume the hypothesis. We will prove by contradiction. We may assume that $G$ has no homogeneous set. Let $a,b$ be two vertices of $G$, define $dist(a,b)$ to be the number of edges in a shortest path from $a$ to $b$. If such a path does not exist ($G$ is not connected), then we let $dist(a,b) = \infty$.
    Let $N_0 := \{v\}$. For $i\ge 1$, let $N_{i}(v) := \{u\in V(G) | ~dist(u, v)=i\}$. 
    We are going to prove that for each $i \ge 0$ and for every $u\in N_{i}$, $N(u)$ is a stable set. The proof is by induction on $i$.

    For $i = 0$, suppose that $G[N(v)]$ is not a stable set.  Let $A$ induce a component of $G[N(v)]$ with an edge. By assumption on $G$, $A$ cannot be a homogeneous set. Thus, there exist $x_1$, $x_2$ in $A$ such that $x_1x_2 \in E(G)$ and there exists $y\notin N(v)$ such that $y \sim x_2$  but  $y \not\sim x_1$. Since $G[N(v)]$ has at least two components, there exists $z$ in $N(v)\setminus A$. 
    Then $\{z,v,x_1,x_2,y\}$ induces either a bull, or a house.

    For $i > 0$, consider a vertex $u \in N_i$. Define $N'_j = N(u) \cap N_j$ for $j = i-1, i , i+1$ (if $N_{i+1} $  exists). We know that $N'_{i-1}$ is a stable set and there are no edges between $N'_{i-1}$ and $N'_{i}$ because each $x \in N'_{i-1}$ is such that $N(x)$ is a stable set (by the induction hypothesis). Suppose $N(u)$ contains an edge. Let $A$ be the component of $G[N(u)]$ that contains this edge. Note that $A \subseteq N'_i \cup N'_{i+1}$. Since $A$ cannot be a homogeneous set, there are vertices $a,b,z$ with  $z \notin N(u), a,b \in N'_i \cup N'_{i+1} , z \sim a, a \sim b , z \not\sim b$.    Let $u'$  be a neighbour of $u$ in $N_{i-1}$. By the induction hypothesis, we have $u'a, u'b \notin E(G)$.  But now $\{u', u, a, b, z\}$ induces a bull or a house. 
\end{proof}


\begin{lemma}\label{lem:hole}
    Let $G$ be a connected (bull, house)-free graph. Suppose $G$ contains a hole with at least five vertices, then $G$ is triangle-free, or $G$ contains a  homogeneous set $H$ such that $G[H]$ is connected.
\end{lemma}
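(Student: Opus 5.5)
The plan is to reduce to Lemma~\ref{lem:split-vertex} whenever possible, and otherwise to build the homogeneous set directly. Let $C = c_1c_2\cdots c_k c_1$ be a hole with $k \ge 5$, with indices taken modulo $k$. The reading of Lemma~\ref{lem:split-vertex} I rely on is that ``two components'' means ``at least two''; its proof only uses that $N(v)$ has a second component. If some $c_i$ has disconnected $G[N(c_i)]$, Lemma~\ref{lem:split-vertex} already gives the conclusion. So I assume that every $G[N(c_i)]$ is connected, and aim to produce a connected homogeneous set.

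The first step is a local claim: \emph{every vertex $x \notin C$ adjacent to two consecutive vertices of $C$ is complete to $C$.}
\begin{itemize}
\item Suppose $x \sim c_1, c_2$. If $x$ missed both $c_3$ and $c_k$, then $\{x, c_1, c_2, c_3, c_k\}$ would be a bull: triangle $xc_1c_2$ with pendants $c_3$ at $c_2$ and $c_k$ at $c_1$, and $c_3 \not\sim c_k$ since $k \ge 5$.
\item Hence $x$ is adjacent to $c_3$ or $c_k$. This gives a new pair of consecutive neighbours, so I can walk around the cycle.
\item If the walk stops, then the neighbours of $x$ on $C$ form a proper run $c_j, \dots, c_l$ of length at least $3$. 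The same bull test applied at the end of the run gives a contradiction. In the extreme cases a house appears instead; for example, if $x$ misses exactly one vertex $c_i$, a house on $x, c_{i-1}, c_i, c_{i+1}$ together with another vertex must be checked.
\end{itemize}
I will do this case check carefully, but it is routine.

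Next, since $G[N(c_2)]$ is connected, it contains a path from $c_1$ to $c_3$. The vertex $x$ following $c_1$ on this path is adjacent to $c_1$ and $c_2$, so by the claim it is complete to $C$. Thus the set $Z$ of vertices complete to $C$ is nonempty. Let $K$ be the vertex set of the component of $G - Z$ containing $C$. Then $|K| \ge 5$, $G[K]$ is connected, and $K \ne V(G)$ because $Z \neq \emptyset$.

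Every vertex outside $K$ that has a neighbour in $K$ lies in $Z$. So $K$ is homogeneous as soon as \emph{every $z \in Z$ is complete to $K$}, and proving this is the main obstacle. I would argue by contradiction, choosing $z \in Z$ and $y \in K$ with $z \not\sim y$ and with $y$ at minimum distance from $C$ in $G[K]$; note $y \notin C$. Let $y'$ be the predecessor of $y$ on a shortest path to $C$, so $z \sim y'$, and in particular $z \sim y$ fails while $z$ sees all of $C$.
\begin{itemize}
\item If $y' \in C$, say $y' = c_1$, then $y \notin Z$, and by the claim $y$ has no two consecutive neighbours on $C$. So $y \sim c_1$ but $y \not\sim c_2, c_k$. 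Then $\{y, c_1, z, c_2, c_k\}$ or $\{y, c_1, z, c_3, c_4\}$ should give a bull or a house, using that $c_1$ has non-adjacent neighbours and that $z$ is complete to $C$. For instance, $y - c_1 - c_2$, $c_1 - z$, $z - c_3$ is the bull pattern once the right non-edges are confirmed.
\item If $y'$ is farther from $C$, I repeat the argument with $y'$ and its own predecessor $y''$, both of which are adjacent to $z$ by minimality, plus a vertex of $C$ not adjacent to $y, y', y''$. This should produce a bull of the form triangle $z y' y''$ with pendants $y$ and a suitable $c_i$.
\end{itemize}
Once every $z \in Z$ is shown complete to $K$, the set $K$ is a homogeneous set with $G[K]$ connected, as required.
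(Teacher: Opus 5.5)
\textbf{There is a genuine gap: your local claim is false, and so is the step $Z\neq\emptyset$ that depends on it.} A vertex can see exactly three consecutive vertices of the hole. Suppose $x\sim c_1,c_2,c_3$ and $x$ has no other neighbour on $C$. Your bull test at the pair $c_1c_2$ is not triggered because $x\sim c_3$, and at the pair $c_2c_3$ it is not triggered because $x\sim c_1$. So your walk stops at a run of length three, and no bull or house appears. Runs of length at least four, and extra non-consecutive neighbours, are indeed excluded. The correct statement is the paper's (\ref{eq:three-consecutive}): if no vertex is complete to the hole, a vertex adjacent to $c_i,c_{i+1}$ has trace exactly $\{c_{i-1},c_i,c_{i+1}\}$ or $\{c_i,c_{i+1},c_{i+2}\}$.

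This is not a removable corner case. Blow up each vertex of the hole $c_1\cdots c_5$ into an edge $c_ic_i'$, so that $c_i'$ is adjacent to $c_{i-1},c_i,c_{i+1},c_{i-1}',c_{i+1}'$.
\begin{itemize}
\item The resulting graph is connected and (bull, house)-free. Neither the bull nor the house has a pair of true twins, so an induced copy would use one vertex from each pair $\{c_i,c_i'\}$ and would then be a $C_5$.
\item Every $G[N(c_i)]$ is connected, since $c_i'$ is adjacent to all of $N(c_i)\setminus\{c_i'\}$.
\item Yet no vertex is complete to $C$.
\end{itemize}
So $Z=\emptyset$ and $K=V(G)$, and your argument produces nothing. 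The actual homogeneous sets here are the pairs $\{c_i,c_i'\}$, which your construction never looks at.

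\textbf{What is missing is the paper's second branch, for the case that no vertex is complete to the hole.}
\begin{itemize}
\item Connectivity of $G[N(c_2)]$ (or Lemma~\ref{lem:split-vertex} if it fails) gives a vertex $y\notin C$ adjacent to $c_1$ and $c_2$.
\item By (\ref{eq:three-consecutive}), $y$ has trace $\{c_k,c_1,c_2\}$ or $\{c_1,c_2,c_3\}$ on $C$, so it acts like a twin of a hole vertex relative to $C$. After relabelling, take the trace to be $\{c_1,c_2,c_3\}$.
\item Let $A$ be the set of vertices whose trace on $C$ is $\{c_1,c_2,c_3\}$ or $\{c_1,c_3\}$. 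A bull/house case analysis shows that the component of $G[A]$ containing the edge $yc_2$ is homogeneous.
\end{itemize}

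Your $Z\neq\emptyset$ branch is essentially the paper's first step and can be kept, but it should not use the false claim either. For $y\in N(C)\setminus Z$ and $z\in Z$ with $z\not\sim y$, pick $j$ with $y\sim c_j$ and $y\not\sim c_{j-1}$. Then $\{y,c_j,c_{j-1},z,c_{j+2}\}$ induces a bull if $y\not\sim c_{j+2}$ and a house otherwise. The sets you propose do not work as written. For instance, if $y$ sees only $c_1$, then $\{y,c_1,z,c_2,c_k\}$ induces a diamond with a pendant vertex, which is neither a bull nor a house.
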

\begin{proof}
    Assume $G$ contains a hole induced by $X = \{x_1, x_2, \ldots, x_{\ell}\}$ such that $x_{i}x_{i+1}\in E(G)$ where $i$ is taken modulo $ \ell \ge 5$. Let $U$ be the set of vertices
    adjacent to every vertex in $X$. Suppose $U$ is nonempty. We first prove that $U$ is complete to $N(X)\setminus U$. Let $y\in N(X)\setminus U$, then there exists $j$ modulo $l$ such that $y \sim x_j$  but  $y \not\sim x_{j-1}$. Suppose $a\in U$ is not adjacent to $y$, then $\{y, x_{j}, x_{j-1}, a, x_{j+2}\}$ induces either a bull or a house, a contradiction. So $U$ is complete to $N(X)\setminus U$.
    
    Write $N_0 = X$, $N_1 = N(X)\setminus U$. Define, for $i \geq 2$,  $N_i = \{ v \in V(G) \setminus (U \cup (\cup_{k=0)}^{k=i-1} N_k)) \; | \; v$ has a neighbor in $N_{i-1} \}$.

    We will prove that  
\begin{equation}\label{eq:level}
	\text{\parbox{.85\textwidth}{$U$ is complete to $\cup_{k=0}^{i} N_k$ for any index $i \geq 0$}}
\end{equation}
    We prove (\ref{eq:level}) by induction on $i$. We know (\ref{eq:level}) holds for $i = 0,1$. Consider the case $i=2$. Suppose that there is a vertex $a \in U$ that is not adjacent to a vertex $z \in N_2$. Consider a neighbor $y \in N_1$ of $z$. We know $a \sim y$. There is an index $j$ such that  $ y \sim x_j$ and  $ y \not\sim x_{j-1}$. Vertex $y$ is adjacent to $x_{j-2}$, for otherwise $\{ x_j, y, a, x_{j-2}, z \}$ induces a bull. 


Vertex $y$ is not adjacent to $x_{j+1}$, for otherwise $\{x_{j+1}, y, x_j, x_{j-1}, x_{j-2}  \}$ induces a house. But now $\{ x_{j-2}, y, a, x_{j+1}, z \}$ induces a bull. So $U$ is complete to $N_2$. Now, by the induction hypothesis, we may assume $U$ is complete to $\cup_{k=0}^{k=i-1} N_k$. For $i \geq 3$, suppose there is a vertex $a \in U$ that is not adjacent to a vertex $z \in N_i$. Consider vertices $b_{i-1} \in N_{i-1}$, $b_{i-2} \in N_{i-2}$ such that $b_{i-1} \sim b_{i-2}, z \sim b_{i-1}$, $z \not\sim b_{i-2}$. Consider a vertex $r \in X$ that is not adjacent to $b_{i-2}$. Now $\{b_{i-2}, b_{i-1}, a, r, z \}$ induces a bull. So (\ref{eq:level}) holds. 
    
    Let $R = V(G) \setminus U \cup (\cup_{k=0}^{i} N_k)$. Then $R$ is anti-complete to  $\cup_{k=0}^{i} N_k$. Thus, $\cup_{k=0}^{i} N_k$ is a connected homogeneous set. So we may assume $U$ is empty. 

    \begin{equation}\label{eq:three-consecutive}
	\text{\parbox{.85\textwidth}{Let $y$ be a vertex in $V(G) \setminus X$ with $y \sim x_i, y \sim x_{i+1}$. Then we have $N(y)\cap X = \{x_i,x_{i+1},x_{i+2}\}$ or $N(y) \cap X = \{x_{i-1}, x_i,x_{i+1}\}$.}}
    \end{equation}

 For simplicity, we will let $i=2$. Let $y$ be a vertex in $V(G) \setminus X$ with $y \sim x_2, y \sim x_{3}$.  Suppose that (\ref{eq:three-consecutive}) does not hold for $y$.
 Since $\{y,x_2, x_3, x_{4}, x_1\}$ does not induce a bull, $y$ must be adjacent to $x_1$ or $x_4$. Assume $y \sim x_{1}$. Let $j$ be the smallest index such that $yx_j\notin E(G)$ (since $U = \emptyset$, $j$ exists) and let $k \geq 4 $ be the largest index such that $y x_k\in E(G)$ ($k$ exists since (\ref{eq:three-consecutive}) fails for $y$). If $j = 4$, then $\{ x_2, x_3, y, x_k, x_4 \}$ induces a bull or a house. So we have $j > 4$. But now $\{ x_{j-2}  , x_{j-1} , y, x_1, x_j\}$ induces a bull or a house. So (\ref{eq:three-consecutive}) holds.

Suppose $N(x_2)\cap N(x_3)$ is empty, then $G[N(x_2)]$ is disconnected, and the result follows from Lemma \ref{lem:split-vertex}. So assume $N(x_2)\cap N(x_3)$ is non-empty. Note that, from (\ref{eq:three-consecutive}), the set $N(x_2)\cap N(x_3)$ is contained in $A_2 \cup A_3$, where $A_2$ = $\{v\in V(G)| N(v)\cap X = \{x_1, x_2, x_3\} \text{ or } N(v)\cap X = \{x_1, x_3\} \}$ and $A_3$ = $\{v\in V(G)| N(v)\cap X = \{x_2, x_3, x_4\} \text{ or } N(v)\cap X = \{x_2, x_4\}\}$. Note that $x_2\in A_2$ and $x_3\in A_3$.

    We may assume there is a vertex $y\in A_2$ such that $yx_2\in E(G)$. Let $B$ induce the component in $G[A_2]$ that contains the edge $yx_2$. We claim that $B$ is a homogeneous set. Suppose not, there exist adjacent vertices $b_1$, $b_2$ in $B$ such that there exists a vertex $z\notin B$ adjacent to $b_2$ but not to $b_1$. If $z$ is not adjacent to $x_3$ (resp. $x_1$), then $\{  b_1, b_2, x_3, x_4, z\}$ (resp. $\{ b_1, b_2,x_1, x_\ell,  z\}$) induces either a bull or a house. Hence $z\in N(x_1)\cap N(x_3)$.  We claim that $z\in A_2$. Suppose not, then $zx_{i}\in E(G)$ for some $i\notin \{1,2,3\}$. If $i=4$, then $\{x_4, z ,x_3, b_1, x_1\}$ induces a house, and if $i\neq 4$, then $\{ b_2, z,  x_3, x_4, x_i\}$ induces either a bull or a house. So $z\in A_2$ and this contradicts $z\notin B$. Therefore, $B$ is a connected homogeneous set.
\end{proof}
    
    
    %

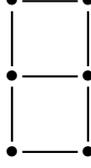
\begin{figure}
\newcommand{\ang}{17}
\newcommand{\sep}{15}
\begin{tikzpicture}[scale=1]
	\tikzstyle{every node}=[font=\small]
\newcommand{\size}{1}
\pgfmathsetmacro{\offset}{7.5}
\newcommand{\bull}{1}{
	\path ( 0  , 0) coordinate (g1);
	\path (g1) +(\offset , 0 ) node (g1_3){}; 
	\path (g1) +(\offset , \size) node	(g1_2){}; %
	\path (g1) +(\offset , \size *2 ) node (g1_1){}; 
    \path (g1) +(\offset + \size, 0) node	(g1_4){};
	  \path (g1) +(\offset + \size, \size) node (g1_5){}; 
    \path (g1) +(\offset + \size, \size * 2) node (g1_6){}; 
	 
	\foreach \Point in {(g1_1),(g1_2),(g1_3),(g1_4), (g1_5), (g1_6) }{
		\node at \Point {\textbullet};
	}
	\draw[thick]  (g1_1) -- (g1_2) (g1_1) -- (g1_6) (g1_2) -- (g1_3)
	(g1_3) -- (g1_4) (g1_4) -- (g1_5) (g1_5) -- (g1_6) (g1_2) -- (g1_5)
	;
	\path (g1) ++(\size * 1.5  ,-\size ) node[draw=none,fill=none] { {}};
}

\end{tikzpicture}
\caption{The domino}\label{fig:domino}
\end{figure}

The domino is the graph shown in Figure \ref{fig:domino}.
\begin{lemma}\label{lem:domino}
    Let $G$ be a connected (bull, house, hole)-free graph that contains a domino. Then $G$ is triangle-free or $G$ contains a \textcolor{\thecolor}{connected} homogeneous set.
\end{lemma}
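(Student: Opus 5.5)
The plan is to copy the strategy of Lemma~\ref{lem:hole}, using the domino in place of the hole. Label the domino as two squares sharing an edge: $a_1 a_2 a_3$ on one side and $b_1 b_2 b_3$ on the other. The edges are $a_1a_2$, $a_2a_3$, $b_1b_2$, $b_2b_3$ and $a_ib_i$ for $i=1,2,3$, so the $4$-cycles are $a_1a_2b_2b_1$ and $a_2a_3b_3b_2$. We assume $G$ has no connected homogeneous set, and we aim to show that $G$ is triangle-free, arguing by contradiction where convenient.

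\textbf{Step 1: no vertex is complete to the domino.} Let $U$ be the set of vertices complete to the domino $D$. Suppose $U\neq\emptyset$. We show that $U$ is complete to every vertex reachable from $D$ in $G\setminus U$, by induction on the distance from $D$, exactly as in (\ref{eq:level}). Let $y\in N(D)\setminus U$ and $a\in U$ with $a\not\sim y$. Pick an edge $pq$ of $D$ with $y\sim p$ and $y\not\sim q$. Then pick a vertex $r$ of $D$ with $r\sim q$, $r\not\sim p$, and choose $r$ so that the resulting $5$-set $\{y,p,q,a,r\}$ induces a bull or a house; if no such $r$ works we use a path of $D$ instead. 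The domino is rich enough that such choices exist, but this needs a small case check. The inductive step for farther levels uses a vertex of $D$ nonadjacent to $b_{i-2}$, as before. It follows that the component of $G\setminus U$ containing $D$ is a connected homogeneous set, unless it is all of $V(G)\setminus U$. In that case $U$ is complete to $V(G)\setminus U$, and then $V(G)\setminus U$ itself, or a component of its complement, gives a homogeneous set; one has to check this set is connected, which it is since it contains the connected domino component. So we may take $U=\emptyset$.

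\textbf{Step 2: classify attachments to the domino.} For $y\notin D$ we determine $N(y)\cap D$, using freeness of bull, house and holes. Holes are excluded, so cycles of length $\ge 5$ through $y$ must have chords, and this forces strong restrictions. I expect the surviving attachment types to be small. One type is a vertex adjacent to exactly the two ends of a rung, or to a consecutive triple on a $4$-cycle. Another type is a ``clone'' of a domino vertex, that is, a vertex adjacent to the same domino-neighbours as $a_i$ or $b_i$, possibly together with that vertex itself. This classification is the main obstacle, since there are $2^6$ patterns. I would first use the two induced $C_4$'s and the $P_4$'s $a_1a_2a_3b_3$, $b_1a_1a_2a_3$ to kill most patterns by bulls. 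The hole-freeness (no $C_5$ or $C_6$ through $y$) then rules out the patterns using the far corners, for example $y\sim a_1,b_3$ only.

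\textbf{Step 3: find the homogeneous set or show triangle-freeness.} Suppose $G$ has a triangle. Mimicking the end of Lemma~\ref{lem:hole}, consider the central edge $a_2b_2$. If $N(a_2)\cap N(b_2)=\emptyset$, then I would aim to show that $G[N(a_2)]$ is disconnected and apply Lemma~\ref{lem:split-vertex}; whether this follows as directly as in the hole case needs to be checked. Otherwise, by Step 2 there is a vertex $y$ forming a triangle with an edge of the domino. Take the set $A$ of vertices with the same attachment type as the relevant domino vertex, and let $B$ be the component of $G[A]$ containing the edge between $y$ and that domino vertex. We then show that $B$ is homogeneous. Suppose $z\notin B$ distinguishes adjacent $b_1',b_2'\in B$. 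Domino vertices on the opposite side then give a bull or house unless $z$ has the same attachment type, which would put $z\in A$ and hence in $B$, a contradiction. This yields a connected homogeneous set.

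If Step 2 leaves more configurations than expected, I would fall back on Lemma~\ref{lem:split-vertex} applied at $a_2$ or $b_2$ wherever their neighbourhoods split.
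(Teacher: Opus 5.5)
Your architecture matches the paper's: handle the set $U$ of vertices complete to $D$, show that a common neighbour of the ends of a domino edge is a ``clone'' of one end, show that a component of a clone class containing an edge is a connected homogeneous set, and otherwise apply Lemma~\ref{lem:split-vertex}. \textbf{The gap is that the two claims carrying the proof are only announced, not proved.}

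\textbf{Step 2.} You need no $2^6$-pattern classification, only this: if $U=\emptyset$ and $x\sim a_2,b_2$, then $N(x)\cap D$ equals $N[a_2]\cap D$ or $N[b_2]\cap D$, where $N[\cdot]$ is the closed neighbourhood (case (iii) of the paper's Claim~\ref{Claim1}). It follows from bull- and house-freeness alone:
\begin{itemize}
\item $x$ sees one of $a_1,b_1$ and one of $a_3,b_3$, since otherwise $\{x,a_2,a_1,b_1,b_2\}$ or $\{x,a_2,a_3,b_3,b_2\}$ is a house;
\item $x$ cannot see both vertices of such a pair, since two further houses would then force $x\in U$;
\item the two crossing patterns give bulls, e.g.\ $\{x,a_2,b_2,b_1,a_3\}$.
\end{itemize}
Hole-freeness is never needed. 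Your anticipated type ``exactly the two ends of a rung'' cannot occur, since it gives one of those houses.

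\textbf{Step 3.} Homogeneity of the clone component is the paper's Claim~\ref{Claim2} and is a real case argument. Let $c,c'$ be adjacent clones of $a_2$, and let $z\sim c'$, $z\not\sim c$. Then:
\begin{itemize}
\item $z\sim a_1$ and $z\sim a_3$, by a bull or house on $\{c,c',a_1,b_1,z\}$, resp.\ $\{c,c',a_3,b_3,z\}$;
\item $z\not\sim b_1$ and $z\not\sim b_3$, by the houses $\{b_1,a_1,z,a_3,c\}$ and $\{b_3,a_3,z,a_1,c\}$;
\item $z\sim b_2$, by the house $\{z,c',a_1,b_1,b_2\}$.
\end{itemize}
So $z$ is itself a clone of $a_2$. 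Your sentence about ``domino vertices on the opposite side'' does not supply this.

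\textbf{Step 1.} The one concrete mechanism you give fails as written. With $a\in U$, $r\sim q$ and $r\not\sim p$, the set $\{y,p,q,a,r\}$ contains the two triangles $apq$ and $aqr$. Bulls and houses each have exactly one triangle, so this set is never either. You need $r$ nonadjacent to both $p$ and $q$; then the set is a bull if $y\not\sim r$ and a house if $y\sim r$. No such $r$ exists when $pq$ is the central rung $a_2b_2$, because $N[a_2]\cup N[b_2]=D$. So $pq$ must be taken on the outer $6$-cycle, which is possible since that cycle is Hamiltonian and each of its edges has a common non-neighbour in $D$.

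Level $2$ of your induction is also not ``exactly as in (\ref{eq:level})'', since that argument used the hole. Here take $b_0\in N(y)\cap D$ and $r\in D\setminus N(y)$ with $b_0\not\sim r$; these exist because the complement of the domino is connected. Then $\{z,y,b_0,a,r\}$ is a bull.

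With these repairs your Step~1 works without any classification, which is slightly cleaner than the paper's Claim~\ref{Claim3}, since that claim relies on Claim~\ref{Claim1}. The case $N(a_2)\cap N(b_2)=\emptyset$ in Step~3 is immediate, because $b_2$ is then isolated in $G[N(a_2)]$.
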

\noindent {\it Proof.}
    Let $G$ be a connected (bull, house)-free graph that contains a domino induced by $D$ = $\{a_1, a_2, \dots, a_6\}$ such that $a_ia_{i+1}\in E(G)$ with $i$ taken modulo 6, and $a_3a_6\in E(G)$. Define sets $A_i$ = $\{v\in V(G)|~ N(v)\cap (D\setminus \{a_i\}) = N(a_i)\cap D\}$ for $i$ modulo 6. Let $U$ be the set of all vertices complete to $D$ and let $R$ be the set of all vertices anti-complete to $D$. All subscripts are taken modulo 6.

Next, we will establish the following
    \begin{claim}\label{Claim1}
        For any $x\in N(D)$, if $x$ is adjacent to two adjacent vertices of $D$, say $a_i$, $a_j$, then $x\in A_i\cup A_j$ or $x\in U$.
    \end{claim}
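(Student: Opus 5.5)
The claim is a finite check on the seven vertices $D\cup\{x\}$, and I plan to do it by exhaustive case analysis, using the symmetries of the domino. The domino consists of the two $4$-cycles $a_1a_2a_3a_6$ and $a_3a_4a_5a_6$, which share the chord $a_3a_6$. Two maps preserve its adjacencies: the map $a_1\leftrightarrow a_5$, $a_2\leftrightarrow a_4$ (fixing $a_3,a_6$), and the map $a_3\leftrightarrow a_6$, $a_2\leftrightarrow a_1$, $a_4\leftrightarrow a_5$. Under these maps the seven edges of $D$ fall into three orbits:
\begin{itemize}[noitemsep, topsep=0pt]
\item[(a)] the chord $a_3a_6$;
\item[(b)] the edges $a_2a_3,a_3a_4,a_5a_6,a_6a_1$, each joining a degree-$2$ vertex to a degree-$3$ vertex of $D$;
\item[(c)] the edges $a_1a_2,a_4a_5$, each joining two degree-$2$ vertices.
\end{itemize}
So I would assume $x\sim a_i,a_j$ for one representative edge from each orbit, namely $a_3a_6$, $a_2a_3$ and $a_1a_2$. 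In each case I list the possible neighbourhoods of $x$ on the remaining four vertices. Every pattern other than the targets $N(a_i)\cap D\cup\{a_i\}$, $N(a_j)\cap D\cup\{a_j\}$ and $D$ itself must then be excluded. The forbidden configuration for each excluded pattern will be an induced bull, an induced house, or an induced hole ($C_5$ or $C_6$) inside $D\cup\{x\}$. Holes are forbidden because the lemma assumes $G$ is hole-free.

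Case (a) illustrates the mechanism. Let $x\sim a_3,a_6$. If $x$ has no neighbour among $a_1,a_2,a_4,a_5$, then $\{a_2,a_3,x,a_6,a_5\}$ induces a bull: the triangle is $a_3xa_6$, with pendant $a_2$ at $a_3$ and pendant $a_5$ at $a_6$. Next I use the triangles $a_3xa_6$ together with the squares $a_1a_2a_3a_6$ and $a_3a_4a_5a_6$. Adjacency of $x$ to exactly one vertex of a pair like $\{a_1,a_2\}$ gives a house or a bull. Mixed choices across the two squares, such as $x\sim a_2$ with $x\sim a_5$ but $x\not\sim a_1,a_4$, give a bull or a house. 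The intended conclusion is that $x$ is adjacent to exactly $\{a_2,a_4\}$, or exactly $\{a_1,a_5\}$, or all four. These outcomes mean $x\in A_3$, $x\in A_6$ and $x\in U$ respectively.

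Cases (b) and (c) are handled the same way. In case (b), with $x\sim a_2,a_3$, the targets are $A_2$ (neighbours $a_1,a_3$ on the rest), $A_3$ (neighbours $a_2,a_4,a_6$) and $U$. The first test is the set $\{x,a_2,a_3,a_4,a_1\}$, and I then branch on $x$'s adjacency to $a_6$ and $a_5$. When $x$ misses the middle of a long path around $D$, I expect to obtain a $C_5$ or $C_6$ through $x$. For example, $x\sim a_2,a_3,a_5$ with $x\not\sim a_4,a_6,a_1$ should yield such a configuration. In case (c), with $x\sim a_1,a_2$, the targets are $A_1$ (neighbours $a_2,a_6$) and $A_2$ (neighbours $a_1,a_3$). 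Here $x\not\sim a_3,a_6$ together with $x\sim a_4$ or $x\sim a_5$ should produce a hole such as $x a_2 a_3 a_4\ldots$.

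The main obstacle is the bookkeeping: each case has up to $16$ patterns, and some must be rejected by a less obvious $5$-set. To keep this under control, I will first force the adjacency of $x$ to the vertices nearest $a_i,a_j$ using bulls and houses centred on the triangle $a_ix a_j$. Only after that will I handle the far vertices, using holes and houses through the squares of the domino.
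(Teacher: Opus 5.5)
Your overall route is the paper's. The three orbit representatives $a_1a_2$, $a_2a_3$, $a_3a_6$ are exactly its cases (i), (ii), (iii), your target sets are correct, and every non-target pattern is excluded by a five-vertex witness inside $D\cup\{x\}$. Holes are never needed: every excluded pattern already contains a bull or a house, and the paper's proof uses nothing else, so the claim holds for all (bull, house)-free graphs.

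However, one step of your case (a) is false as written. You say that ``adjacency of $x$ to exactly one vertex of a pair like $\{a_1,a_2\}$ gives a house or a bull.'' This contradicts your own conclusion. The targets $N(x)\cap D=\{a_2,a_3,a_4,a_6\}$ (that is, $A_3$) and $N(x)\cap D=\{a_1,a_3,a_5,a_6\}$ (that is, $A_6$) both have $x$ adjacent to exactly one of $a_1,a_2$.

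What is true is the following two-part statement.
\begin{itemize}[noitemsep, topsep=0pt]
\item[(1)] If $x$ misses both $a_1,a_2$, then $\{x,a_3,a_6,a_1,a_2\}$ is a house; likewise for the pair $\{a_4,a_5\}$. So $x$ sees at least one vertex of each pair.
\item[(2)] If $x$ sees both vertices of one pair, it must see both vertices of the other. For example, if $x$ is complete to $\{a_1,a_2,a_3,a_6,a_5\}$ but $x\not\sim a_4$, then $\{x,a_2,a_3,a_4,a_5\}$ is a house. Similarly, if $x$ is complete to $\{a_3,a_4,a_5,a_6,a_1\}$ but $x\not\sim a_2$, then $\{x,a_1,a_2,a_3,a_4\}$ is a house.
\end{itemize}
Statement (2) is the paper's (\ref{eq:see-four}). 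It is precisely the ``less obvious 5-set'' you anticipate: a witness that uses one vertex of one square together with three vertices of the other square. It also drives cases (b) and (c); for instance, it is what settles $x$ complete to $\{a_1,a_2,a_3,a_6\}$ in case (c). With (1), (2) and your two mixed-choice bulls ($\{x,a_3,a_6,a_1,a_4\}$ and $\{x,a_3,a_6,a_2,a_5\}$), case (a) yields exactly your three outcomes.

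Your case (c) illustration is also off. The pattern $x\not\sim a_3,a_6$ is already the house $\{x,a_1,a_2,a_3,a_6\}$. Moreover, if $x\sim a_4$, then $xa_2a_3a_4$ is an induced $C_4$, not a hole.
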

We will first establish the following statement.   
\begin{equation}\label{eq:see-four}
	\text{\parbox{.85\textwidth}{
    Let $x\in N(D)$. If $x$ is complete to $\{ a_1, a_2, a_3, a_6\}$, then $x \in U$. Symmetrically, if $x$ is complete to $\{ a_3, a_4, a_5, a_6\}$, then $x \in U$.
    }}
\end{equation}
    
    Suppose $x$ is complete to $\{ a_1, a_2, a_3, a_6\}$, but $x \not\in U$. Without loss of generality, we may assume $x$ is not adjacent to $a_4$. So $x a_5 \in E(G)$, for otherwise $\{ x, a_3, a_4, a_5, a_6 \}$ induces a house. But now $\{ x, a_2, a_3, a_4, a_5\}$ induces a house. The second part of (\ref{eq:see-four}) follows by symmetry. 

    We continue the proof of Claim \ref{Claim1}. Up to symmetry we may assume (i) $x\in N(a_1)\cap N(a_2)$, or (ii) \textcolor{\thecolor}{ $x\in (N(a_2)\cap N(a_3))$}, or (iii) $x\in (N(a_3)\cap N(a_6))$. 
    
   Consider  case (i), that is, $x\in N(a_1) \cap N(a_2)$. By (\ref{eq:see-four}) we may assume $x$ is nonadjacent to at least one vertex in $\{ a_3, a_6 \}$. If $x a_3, x a_6 \notin E(G)$, then $\{ x, a_1, a_2, a_3, a_6\}$ induces a house. So $x$ is adjacent to exactly one vertex in $\{ a_3, a_6 \}$. Without loss of generality, assume $x a_3 \notin E(G)$ (and $x a_6 \in E(G)$). We have $x a_5 \notin E(G)$, for otherwise $\{   a_1, a_2, x, a_5, a_3 \}$ induces a bull, and $x a_4 \notin E(G)$, for otherwise $\{ a_1, x, a_2, a_3, a_4\}$ induces a house. So $x \in A_1$. If $x a_6 \notin E(G)$, then we have by symmetry, $x \in A_2$ and we are done.

Now, consider case (ii), that is, $x\in (N(a_2)\cap N(a_3))$. Suppose $x a_1 \in E(G)$. Then by case (i), we know $x \in U \cup (A_1 \cup A_2)$. But we have $x \not\in A_1$ because $x a_3 \in E(G)$. So $x \in U \cup A_2$ and we are done. So we know $x a_1 \not\in E(G)$. We have the following implications: 
\begin{itemize} 
\item $x a_6 \in E(G)$, for otherwise $\{ x, a_2, a_3, a_6, a_1 \}$ induces a house,
\item $ x a_4 \in E(G)$, for otherwise, $\{x, a_3, a_6, a_1, a_4 \}$ induces a bull, 
\item $x a_5 \not\in E(G)$, for otherwise $x \in U$ by (\ref{eq:see-four}), and this would contradict $x a_1 \not\in E(G)$.
\end{itemize} 
But now $x \in A_3$, and we are done. 

    Now, consider case (iii), that is, $x\in (N(a_3)\cap N(a_6))$. Vertex $x$ must be adjacent to at least one vertex of $\{ a_4, a_5\}$, for otherwise $\{ x, a_3, a_6, a_5, a_4\}$ induces a house. 
    Vertex $x$ must be nonadjacent to at least one vertex of $\{ a_4, a_5\}$, for otherwise by (\ref{eq:see-four}), we have $x \in U$. Suppose $x a_4 \notin E(G)$ (and so $x a_5 \in E(G)$). We have $x a_1 \in E(G)$, for otherwise, $\{x, a_3, a_6,  a_1, a_4 \}$ induces a bull. We have $x a_2 \notin E(G)$, for otherwise $\{ x, a_2, a_3, a_4, a_5\}$ induces a house. But now $x \in A_6$. By a symmetrical argument, we see that if $x a_4 \in E(G)$ (and $ x a_5 \notin E(G)$), then $x \in A_3$.
    This completes the proof of Claim \ref{Claim1}.

    \begin{claim}\label{Claim2}
        Suppose, for some $i\in \{1,2,\ldots,6\}$, there are two adjacent vertices in $A_i$, then $G$ has a \textcolor{\thecolor}{connected} homogeneous set.
    \end{claim}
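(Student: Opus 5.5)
We take the component $B$ of $G[A_i]$ that contains the given edge and show that $B$ is homogeneous, in the same way the component of $G[A_2]$ was handled in the proof of Lemma~\ref{lem:hole}. Note that $|B|\ge 2$ and $B\ne V(G)$, since the vertices of $D\setminus\{a_i\}$ are not in $A_i$. So $B$ is a connected homogeneous set unless some $z\notin B$ distinguishes two vertices of $B$. Because $B$ is connected, we may then choose adjacent $b_1,b_2\in B$ with $z\sim b_2$ and $z\not\sim b_1$. By definition of $A_i$, both $b_1,b_2$ have exactly the neighbours of $a_i$ in $D\setminus\{a_i\}$, and we may use $a_i$ itself as one of the $b$'s when convenient. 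By the symmetry of the domino (the automorphisms $a_1\leftrightarrow a_5$, $a_2\leftrightarrow a_4$ and the reflection swapping the two squares), it suffices to treat two cases: $i=1$, a degree-$2$ corner with $N(a_1)\cap D=\{a_2,a_6\}$, and $i=3$, a degree-$3$ vertex with $N(a_3)\cap D=\{a_2,a_4,a_6\}$.

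First, we force $z$ to be adjacent to every $D$-neighbour of $a_i$. In case $i=1$, suppose $z\not\sim a_2$. If $z\not\sim a_3$, then $\{b_1,b_2,a_2,a_3,z\}$ is a bull: triangle $b_1b_2a_2$ with pendants $a_3$ at $a_2$ and $z$ at $b_2$. If $z\sim a_3$, the same set induces a house, with square $b_2za_3a_2$ and roof $b_1$. Hence $z\sim a_2$, and symmetrically $z\sim a_6$. In case $i=3$, the analogous sets built from $b_1,b_2$, a $D$-neighbour $a_j$ of $a_3$, and a vertex of $D$ adjacent to $a_j$ but not to $a_3$ (for instance $a_1$ for $a_2$, and $a_5$ for $a_4$) should again give a bull or a house unless $z$ is complete to $\{a_2,a_4,a_6\}$.

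Next, we pin down the other adjacencies of $z$ to $D$. The goal is to show that $z\in A_i$, which contradicts $z\notin B$ because $z\sim b_2$. We use Claim~\ref{Claim1}: once $z$ is adjacent to two adjacent vertices of $D$, it lies in $U$ or in one of two prescribed sets $A_j$. In case $i=3$, $z$ is adjacent to $a_2$ and $a_3$ if $z\sim a_3$. The sets $A_2,A_3$ and the set $U$ must then be excluded, or shown to force $z\in A_3$, using $b_1$ or $b_2$ with small bull or house configurations. If $z\not\sim a_3$, we compare $z$ with $a_3$ directly. In case $i=1$, if $z\sim a_1$ then Claim~\ref{Claim1} gives $z\in U\cup A_1\cup A_2\cup A_6$. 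The option $z\in A_2$ is impossible since $z\sim a_6$, and likewise $A_6$ is impossible, so only $A_1$ (the desired contradiction) or $U$ remain. If $z\not\sim a_1$, then $z$ and $a_1$ are two nonadjacent common neighbours of $a_2,a_6$. We then use $b_1$ together with $a_3$ or $a_5$ to find a bull or house, and hole-freeness to rule out long induced cycles through $z,a_2,a_3,a_4,a_5,a_6$.

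The main obstacle is the case $z\in U$, and more generally $z$ seeing too much of $D$. Here the goal becomes to show that every vertex of $U$ is complete to $B$. I would first try the configuration $\{b_1,b_2,u,a_j,a_k\}$ for $u\in U$ with $u\not\sim b_1$, where $a_j$ is a $D$-neighbour of $a_i$ and $a_k$ is a $D$-non-neighbour of $a_i$ adjacent to $a_j$. This mimics the argument in Lemma~\ref{lem:hole} that $U$ is complete to $N(X)\setminus U$. If some subcase resists, I would fall back on the structure of $U$ as in Lemma~\ref{lem:hole}: either $U$ is complete to everything reachable from $D$ outside $U$, which yields a different connected homogeneous set, or the offending configuration gives a bull or house. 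Note also that a component of $G[A_i]$ through an edge $b_1b_2$ with a missing $a_i$ can be handled by replacing $a_i$ with $b_1$, since $D-a_i+b_1$ is again a domino.
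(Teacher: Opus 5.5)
Your setup matches the paper's. You take the component $B$ of $G[A_i]$, pick adjacent $b_1,b_2\in B$ and $z\notin B$ with $z\sim b_2$, $z\not\sim b_1$, reduce to $i\in\{1,3\}$, and force $z$ onto the $D$-neighbours of $a_i$ by the bull/house on $\{b_1,b_2,a_j,a_k,z\}$. For $i=3$ this also works for $a_6$, with $a_k=a_1$.

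\textbf{The gap.} You never show that $z$ has no further neighbours in $D\setminus\{a_i\}$, and that is the real content of the claim.
\begin{itemize}
\item For $i=3$ you only say which sets must be excluded.
\item For $i=1$ with $z\not\sim a_1$ you appeal to an unspecified hole-freeness argument.
\item The one concrete configuration you give for $z\in U$ fails. For $u\in U$ with $u\sim b_2$, $u\not\sim b_1$, the set $\{b_1,b_2,u,a_j,a_k\}$ (e.g.\ $\{b_1,b_2,u,a_2,a_3\}$ when $i=1$) is the induced path $b_1b_2ua_k$ plus $a_j$ adjacent to all four. That is a gem, which is not forbidden, and the same holds for the other choices of $a_j,a_k$.
\item Your fallback is not carried out.
\end{itemize}

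The split on whether $z\sim a_1$ is an artefact of going through Claim~\ref{Claim1}: membership in $A_i$ depends only on adjacency to $D\setminus\{a_i\}$. Your domino-swap remark would remove that split if you replaced $a_i$ by $b_2$ rather than $b_1$, since then $z$ sees two adjacent vertices of the new domino. You do not use it that way, and you would still have to exclude vertices complete to the new domino.

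\textbf{The missing idea.} $b_1$ itself closes an induced $C_4$ with $z$.

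For $i=1$: once $z\sim a_2,a_6$, the set $\{z,a_2,b_1,a_6\}$ induces a $C_4$, because $z\not\sim b_1$ and $a_2\not\sim a_6$. Then:
\begin{itemize}
\item $z\sim a_5$ would make $\{a_5,a_6,z,a_2,b_1\}$ a house.
\item Then $z\sim a_4$ would make $\{b_2,z,a_6,a_5,a_4\}$ a house (square $za_4a_5a_6$, roof $b_2$).
\item Then $z\sim a_3$ would make $\{z,a_3,a_4,a_5,a_6\}$ a house (square $a_3a_4a_5a_6$, roof $z$).
\end{itemize}

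For $i=3$: once $z\sim a_2,a_4$, the set $\{z,a_2,b_1,a_4\}$ induces a $C_4$. So $z\sim a_1$ gives the house $\{a_1,a_2,z,a_4,b_1\}$, and symmetrically $z\not\sim a_5$. Together with $z\sim a_6$ this gives $z\in A_3$.

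In both cases $z\in A_i$ and $z\sim b_2$, so $z\in B$, a contradiction. This is the paper's argument. It excludes $U$ automatically, since a vertex of $U$ sees $a_5$ (resp.\ $a_1$), and it needs neither Claim~\ref{Claim1} nor hole-freeness.
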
 

    By symmetry of the domino, we only prove this Claim for $i=1$ and $i=3$.
    
    \vspace{5pt}
    \noindent
    For $i=1$, let $C$ induce a component of $G[A_1]$ with an edge. We claim that $C$ is a homogeneous set. If not, there exist two adjacent vertices $a$, $a' \in A_1$ and a vertex $x \notin A_1$, such that $xa'\in E(G)$ but $xa\notin E(G)$. Suppose $xa_6\notin E(G)$ (resp. $xa_2\notin E(G)$), then $\{a,a',a_5,a_6,x\}$ (resp. $\{a, a', a_2,a_3, x\}$) would induce either a bull, or a house. So $xa_6, xa_2\in E(G)$. We have $xa_5\notin E(G)$ (for otherwise $\{ a_5, a_6, x, a_2, a\}$ would induce a house), $xa_4\notin E(G)$ (for otherwise $\{a', x, a_6, a_5, a_4\}$ would induce a house), and $xa_3\notin E(G)$ (for otherwise $\{x, a_3, a_4, a_5, a_6\}$ would induce a house). This contradicts $x\notin A_1$.

    \vspace{5pt}
    \noindent 
    For $i=3$, let $C^{'}$ induce a component of $G[A_3]$ with an edge. We claim that $C^{'}$ is a homogeneous set. If not, there exist two adjacent vertices $c$, $c' \in A_3$ and a vertex $x \notin A_3$, such that $xc'\in E(G)$ but $xc\notin E(G)$. If $xa_2\notin E(G)$ (resp. $xa_4\notin E(G)$), then $\{c, c', a_2, a_1, x\}$ (resp. $\{ c', c, a_4, a_5, x\}$) induces either a bull or a house. So we have $xa_2, xa_4 \in E(G)$. Suppose $xa_1 \in E(G)$ (resp. $xa_5\in E(G)$), then $\{a_1,a_2,x, a_4, c\}$ (resp. $\{a_5, x, a_4, c , a_2\}$) would induce a house. So we have $xa_1$, $xa_5 \not\in E(G)$. Now if $xa_6\notin E(G)$, then $\{ x, c', a_2, a_1, a_6\}$ would induce a house. Then $x$ must be in $A_3$, a contradiction. Claim \ref{Claim2} is established.

\vspace{8pt}
\noindent
    From Claims \ref{Claim2} and \ref{Claim1}, we assume that the sets $A_i$'s are independent for all $i\in \{1,2,\ldots,6\}$ and that no vertex in $N(D)\setminus U$ is adjacent to two adjacent vertices of $D$. Suppose $U$ is empty, then there is no vertex in $N(a_1)$ which is adjacent to $a_2$. This implies that $G[N(a_1)]$ is disconnected and the result follows from Lemma \ref{lem:split-vertex}. So we assume that $U$ is nonempty. We first prove that $U$ is complete to every vertex in $N(a_i)\setminus U$, for all $i$ modulo 6.

\begin{claim}\label{Claim3}
        For any $x\in N(D)\setminus U$, $x$ is complete to $U$. Moreover, every vertex in $N(x)\cap R$ is complete to $U$.
\end{claim}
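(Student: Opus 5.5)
The plan is to argue by contradiction in both parts, each time exhibiting an induced bull or house made of $x$, a vertex $u \in U$, and one or two vertices of $D$. Throughout I use the two facts already established: $U \neq \emptyset$, and no vertex of $N(D)\setminus U$ is adjacent to both ends of an edge of $D$ (Claim~\ref{Claim1} together with Claim~\ref{Claim2}). The second fact says that $N(x)\cap D$ is a nonempty stable set of the domino whenever $x \in N(D)\setminus U$. Recall also that every vertex of $U$ is complete to $D$ and every vertex of $R$ is anticomplete to $D$.

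\emph{First part.} Let $x \in N(D)\setminus U$ and $u \in U$, and suppose $x \not\sim u$. I split according to $|N(x)\cap D|$.
\begin{itemize}
\item Suppose $x$ has two nonadjacent neighbours $a_i, a_j \in D$. Then $x a_i u a_j$ is an induced $C_4$. Now I look for a vertex $a_k \in D$ adjacent to $a_i$ but not to $a_j$, or, symmetrically, adjacent to $a_j$ but not to $a_i$. Such an $a_k$ is not adjacent to $x$, since $x$ sees no edge of $D$, and it is adjacent to $u$. Hence $\{x,a_i,a_j,u,a_k\}$ induces a house. Such an $a_k$ always exists, because two nonadjacent vertices of the domino never have $N_D(a_i)\subseteq N(a_j)$ and $N_D(a_j)\subseteq N(a_i)$ simultaneously. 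For example, for the pair $a_2,a_6$ one can take $a_5$. This is a short check over the nonadjacent pairs up to the symmetry of $D$.
\item Suppose $N(x)\cap D=\{a_i\}$. Choose a neighbour $a_k$ of $a_i$ in $D$ and a vertex $a_j\in D$ adjacent to neither $a_i$ nor $a_k$. For $a_i = a_3$, say, take $a_k=a_2$ and $a_j=a_5$. Then $a_i,u,a_k$ is a triangle with pendant $x$ at $a_i$ and pendant $a_j$ at $u$. This gives a bull, since $x\not\sim a_k,a_j,u$ and $a_j\not\sim a_i,a_k$. Again one checks over the edges of $D$ that a suitable pair $(a_k,a_j)$ exists for each $a_i$.
\end{itemize}

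\emph{Second part.} Let $y\in N(x)\cap R$ and $u\in U$ with $y\not\sim u$. By the first part, $x\sim u$. Pick $a_i\in N(x)\cap D$; then $x,u,a_i$ is a triangle with pendant $y$ at $x$. The aim is a vertex $a_j\in D$ with $a_j\not\sim a_i$ and $a_j\not\sim x$. Such an $a_j$ is automatically a pendant at $u$ (it is nonadjacent to $y$ since $y\in R$), so $\{y,x,u,a_i,a_j\}$ induces a bull. If $N(x)\cap D=\{a_i\}$, any non-neighbour of $a_i$ in $D$ works. In general, $N(x)\cap D$ is a stable set of the domino, of size at most $3$, and I expect that one can always choose $a_i$ in it so that some vertex of $D$ is nonadjacent to $a_i$ and lies outside $N(x)$.

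The main obstacle is this last case check. One must verify, for every stable set $S$ of the domino (the possible $N(x)\cap D$), that $D\setminus S$ is not contained in $N(a_i)$ for all $a_i\in S$. Maximal stable sets such as $\{a_1,a_3,a_5\}$ are the delicate ones, and this is where I would be most careful. If some $S$ fails, the fallback is a house built on the path $y x a_i$ together with $u$ and a second neighbour of $x$ in $D$.
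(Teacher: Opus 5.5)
Your argument is correct and is essentially the paper's. In both parts the contradiction is a bull or house on $x$, a vertex of $U$, and one or two domino vertices. Your second part is exactly the paper's bull: triangle $x u a_i$, with pendant $y$ at $x$ and pendant $a_j$ at $u$. The paper organizes its cases by an index $i$ with $x\sim a_i$, $x\not\sim a_{i+1}$, rather than by $N(x)\cap D$. As a result, its first part does not even need the fact that $N(x)\cap D$ is stable.

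You left the check for the second part as an expectation, but it is immediate, so no fallback is needed. If $|S|\le 2$, then $|D\setminus S|\ge 4$, which exceeds the maximum degree $3$ of the domino, so any $a_i\in S$ has a non-neighbour in $D\setminus S$. The only stable triples are $\{a_1,a_3,a_5\}$ and $\{a_2,a_4,a_6\}$. For these, take $a_i=a_1$, $a_j=a_4$ (resp.\ $a_i=a_2$, $a_j=a_5$).
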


Since $x\notin U$, there exists an $i$ modulo $6$ such that $xa_i\in E(G)$ but $xa_{i+1}\notin E(G)$. Let $x$ be nonadjacent to some vertex $b\in U$. When $i\notin \{3,6\}$, $\{a_{i+1}, a_i, b,a_{i-3}, x\}$ induces either a bull or a house. When $i\in \{3,6\}$, $\{a_{i+1}, a_i, b,a_{i-2}, x\}$ induces either a bull or a house. So we conclude that each vertex in $N(D)\setminus U$ is complete to $U$. Suppose $y\in N(x)\cap R$ is nonadjacent to some vertex $b'\in U$. When $i\notin \{3,6\}$, since $x$ cannot be adjacent to both $a_{i+3}$ and $a_{i+4}$ (by Claim \ref{Claim1}), we see that $\{a_{i}, x,  b', a_{i+3}, y\}$ or $\{a_{i}, x,  b', a_{i+4}, y\}$ induces a bull. When $i\in \{3,6\}$, if $x \notin N(a_{i+4})$, then $\{a_i, b', x,y, a_{i+4}\}$ induces a bull, and if $x\in N(a_{i+4})$, then $\{a_{i+4},x, b',a_{i+1}, y\}$ induces a bull.  
Claim \ref{Claim3} is established. 

We continue the proof of the Lemma. 
Let $C^{*}$ induce the component in $G[V(G)\setminus U]$ that contains $D$. We claim that $C^{*}$ is complete to $U$. 
Define $N_0 = D, N_1 = N(D) \setminus U$ and for $i \geq 2, N_i = \{ z \in C^* \setminus (\cup_{k = 0}^{i-1} N_k )|$ $z$ has a neighbor in $N_{i-1}$\}. We have $C^* = \cup_{k = 0}^{m} N_k$, where $m$ is the largest subscript such that $N_m \not= \emptyset$. By Claim \ref{Claim3}, we know that $U$ is complete to $N_0 \cup N_1 \cup N_2$. Suppose that some vertex $b \in U$ is not adjacent to some vertices in $C^*$. Let $i$ be the smallest subscript such that $b$ is complete to $\cup_{k = 0}^{i} N_k$ but not complete to $N_{i+1}$. We know $i \geq 3$. Let $y$ be a vertex in $N_{i+1}$ that is not adjacent to $b$. By definition there are vertices $y_i \in N_i, y_{i-1} \in N_{i-1}$ such that $y y_{i}, y_{i-1} y_{i} \in E(G)$. By definition, there is a vertex $x_j \in D$ such that $x_j y_{i-1} \not\in E(G)$. Now the set $\{y, y_i, y_{i-1}, x_j, b\}$ induces a bull.
Therefore, $U$ is complete to $C^{*}$. Hence, $C^{*}$ is a homogeneous set of $G$. \qed

A graph is {\it HHD}-free if it does not contain an induced subgraph isomorphic to a house, hole, or domino. We will need the following result known in the literature (a vertex $x$ is {\it simplicial} if $N(x)$ induces a clique). 
\begin{theorem}[\cite{HoaKho1988}]\label{thm:HHD-free}
    Let $G$ be an HHD-free graph; then at least one of the following conditions must be satisfied:
    \begin{itemize}[noitemsep, topsep=0pt]
        \item[(i)] $G$ is a clique;
        \item[(ii)] $G$ contains a homogeneous set that induces a connected subgraph in the complement $\overline{G}$;
        \item[(iii)] $G$ contains two nonadjacent simplicial vertices.
    \end{itemize}
\end{theorem}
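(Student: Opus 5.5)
The statement is the cited theorem of \cite{HoaKho1988}, and I would prove it by induction on $|V(G)|$, following the pattern of Dirac's theorem for chordal graphs: we look for simplicial vertices on two sides of a separator. The case where $G$ is not co-connected is easy. Let $H_1,\dots,H_t$ be the vertex sets of the components of $\overline{G}$. If every $H_j$ is a single vertex, then $G$ is a clique and (i) holds. Otherwise some $|H_j|\ge 2$, and $H_j$ is complete to $V(G)\setminus H_j$. So $H_j$ is a homogeneous set inducing a connected subgraph of $\overline{G}$, and (ii) holds. From now on I assume that $G$ is co-connected, is not a clique, and has no homogeneous set that is connected in $\overline{G}$, and I aim to produce two nonadjacent simplicial vertices.

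Since $G$ is not a clique, there is a vertex $x$ with a non-neighbour. Let $C$ be a component of $G-N[x]$, and let $S=N(C)\subseteq N(x)$. The plan is to show that $C$ contains a vertex simplicial in $G$. Applying the same argument to a second, suitably chosen separator would give a simplicial vertex on the other side, nonadjacent to the first. For instance, use the component of $G-N[c]$ containing $x$ for a vertex $c\in C$, or take $x$ itself if $N(x)$ is a clique. To find the vertex in $C$, apply induction to $G' = G[C\cup S]$, or to $G[C\cup S\cup\{x\}]$, which is a proper induced subgraph. Any vertex of $C$ that is simplicial in $G'$ is simplicial in $G$, because all its neighbours lie in $C\cup S$. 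The three outcomes of the induction are handled as follows.
\begin{itemize}[noitemsep, topsep=0pt]
\item If $G'$ is a clique, every vertex of $C$ is simplicial in $G$.
\item If $G'$ has two nonadjacent simplicial vertices, I want one of them in $C$. Both lying in $S$ should be excluded by choosing $C$ extremal, say with $S$ minimal under inclusion among the sets $N(C')$, as for minimal separators. Then every vertex of $S$ has a neighbour in $C$. If two nonadjacent $s,s'\in S$ were both simplicial in $G'$, an induced path between them through $C$, together with $x$, would give a hole or a $C_4$ in which the simpliciality of $s$ or $s'$ is violated.
\item If $G'$ has a homogeneous set $H$ that is co-connected, I want to lift it to $G$.
\end{itemize}

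Lifting $H$ is the main obstacle. Vertices outside $C\cup S$ see only $S$, and I would use house-, hole- and domino-freeness to show that every vertex of $S$ behaves uniformly towards the relevant part. The approach I would try first:
\begin{itemize}[noitemsep, topsep=0pt]
\item If $H\subseteq C$, then $H$ is already homogeneous in $G$, which contradicts the assumption on $G$.
\item If $H$ meets $S$, take a vertex $z\notin C\cup S$ that is mixed on $H$. Since $H$ is co-connected, there are nonadjacent $h,h'\in H$ with $z\sim h$ and $z\not\sim h'$.
\item Combine $h,h'$ with a vertex of $C\cup S$ outside $H$ that is complete to $H$. Such a vertex exists, since otherwise $H$ is anticomplete to the rest and $G'$ would be disconnected, with $H$ a union of components. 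Add $x$ or a path into $C$.
\end{itemize}
This configuration should produce an induced house, domino or hole. A careful case analysis on whether $h,h'$ lie in $S$ or in $C$ is where I expect most of the work to be. If this lifting fails in some case, I would fall back on choosing $H$ maximal and replacing $G'$ by the graph obtained from $G$ by contracting homogeneous sets to single vertices, an operation which preserves HHD-freeness.
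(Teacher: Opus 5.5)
The paper does not prove this statement; it is quoted from \cite{HoaKho1988}. Judged on its own, your plan has a genuine gap, at the step you flag as the main obstacle. A co-connected homogeneous set $H$ of $G'$ need not be homogeneous in $G$, and no house, hole or domino need appear, so outcome (ii) cannot be dismissed by contradiction. Concretely, take the $C_4$ $a\,p\,c\,q$, add a vertex $b$ adjacent to $a,p,q$, a pendant vertex $z$ at $p$, and a pendant vertex $w$ at $a$. The five non-pendant vertices span seven edges, so they induce neither a house nor a $C_5$. Pendant vertices lie in no house, hole or domino. So $G$ is HHD-free, and one checks that it is prime, hence an instance of your main case. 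With $x=w$ you get $C=G-N[w]=\{b,p,q,c,z\}$ and $S=\{a\}$. Then $G[C\cup S\cup\{x\}]=G$ is not a proper subgraph, contrary to your claim, and you are forced to use $G'=G-w$. The only simplicial vertex of $G-w$ is $z$, so the induction returns only outcome (ii), with $H=\{a,b,c\}$: $p,q$ are complete to it, $z$ is anticomplete to it, and $c\not\sim a,b$. This $H$ meets $S$ and $w$ splits it ($w\sim a$, $w\not\sim c$), yet $\{w,a,c,p,q\}$ induces just a $C_4$ with a pendant edge. The vertex you want ($z$) exists, but your argument never produces it. Your fallback does not fix this either. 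Contracting $H$ turns $G'$ into the path $q\,h\,p\,z$, one of whose two simplicial vertices, $q$, is not simplicial in $G'$; in general the returned vertex can even be the contracted vertex of a non-clique $H$, which corresponds to no simplicial vertex. Note also that with $G'=G[C\cup S]$ your argument excluding two nonadjacent simplicial vertices in $S$ breaks down, because it uses $x\notin G'$. For the $C_4$ $x\,s\,c\,s'$, the graph $G[C\cup S]$ is the path $s\,c\,s'$, in which both $s$ and $s'$ are simplicial.

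What is missing is an induction hypothesis that survives passing to $G'$ and to quotients by homogeneous sets; simpliciality does not. A natural strengthening is: every HHD-free graph that is not a clique has two nonadjacent vertices, neither of which is the midpoint of an induced $P_4$. This implies the theorem. If such a $v$ is not simplicial, then any co-component $M$ of $G[N(v)]$ with $|M|\ge 2$ is a homogeneous set that is connected in $\overline{G}$: nonadjacent vertices of $M$ have the same neighbours outside $N[v]$, or $v$ would be the midpoint of a $P_4$, and co-connectivity of $M$ does the rest. Unlike simpliciality, this property passes through a homogeneous set $H$, because an induced $P_4$ meets $H$ in at most one vertex or lies inside it. Vertices of $G/H$ other than the contracted vertex keep the property in $G$, and the contracted vertex can be replaced by a vertex of $G[H]$ with the property, preserving nonadjacency. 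So outcome (ii) disappears from the induction, and you are reduced to prime graphs, where the property is equivalent to being simplicial. Even then you still owe the separator argument. You need a choice of proper subgraph, and a use of house-, hole- and domino-freeness showing that a vertex with the property found inside $C$ keeps it in $G$. This is not automatic: in the example above, $q$ is the midpoint of no $P_4$ in $G-w$, but it is the midpoint of $c\,q\,a\,w$ in $G$.
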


Now we can prove our structural theorem for (bull, house)-free graphs. 

\noindent {\it Proof of Theorem \ref{thm:(bull,house)-free}}. We prove by contradiction. Let $G$ be a connected (bull, house)-free graph. Assume that $G$ does not satisfy (i)-(ii). From Lemma \ref{lem:hole}, we assume that $G$ is hole-free and from Lemma \ref{lem:domino}, we assume that $G$ is domino-free. From Theorem \ref{thm:HHD-free}, $G$ contains nonadjacent simplicial vertices, say $v$ and $v'$. Let $K = N(v)$, $W = N(K) \setminus \{v\}$, and  $R = V(G) \setminus (\{v\}\cup K\cup W)$. Since $G$ is connected,  we may assume $W \not = \emptyset$, for otherwise $K$ is a homogeneous set (if $|K| \geq 2$), or $G$ is triangle-free (if $|K| = 1$). 

We will establish a number of claims, as follows.
\begin{equation}\label{eq:K-big}
	\text{\parbox{.85\textwidth}{
    $|K| \ge 2$.
    }}
\end{equation}
Suppose $K$ contains a single vertex $k$. Since $G$ is connected, vertex $k$ must have a neighbor in $W$. Then $G[N(k)]$ is disconnected and the result follows from Lemma \ref{lem:split-vertex}. Thus, (\ref{eq:K-big}) holds.
\begin{equation}\label{eq:K-W-complete}
	\text{\parbox{.85\textwidth}{
     There is a vertex, say $v_1$, in $K$ complete to $W$, and there is a vertex, say $w_1$, in $W$ complete to $K$.
    }}
\end{equation}
If there is a vertex $v'$ in $K$ that has no neighbor in $W$, then $\{v, v'\}$ is a homogeneous set. So we assume that every vertex in $K$ has a neighbor in $W$. We claim that for any two vertices $u_1$, $u_2\in K$, we have $N(u_1)\subseteq N(u_2)$ or $N(u_2)\subseteq N(u_1)$. If not, there exist vertices $w$, $w' \in W$ such that $w\in N(u_1)\setminus N(u_2)$ and $w'\in N(u_2)\setminus N(u_1)$. We see that $\{v, u_1, u_2, w', w\}$ would induce either a bull or a house. Let $v_1$ be a vertex in $K$ with the most neighbors in $W$. Then $v_1$ is adjacent to every vertex in $W$. Similarly, we can see that for any two vertices $w$, $w' \in W$, $N(w)\cap K \subseteq N(w')\cap K$ or $N(w)^{'}\cap K \subseteq N(w)\cap K$. So there exists a vertex, say $w_{1}$, in $W$ which is adjacent to every vertex in $K$. Thus (\ref{eq:K-W-complete}) holds. 
\begin{equation}\label{eq:W-R}
	\text{\parbox{.85\textwidth}{
     For any $x$, $y \in W$, if $xy\in E(G)$, then $N(x)\cap R$ = $N(y)\cap R$.
    }}
\end{equation}

Suppose for some adjacent vertices $x$, $y\in W$, we have $N(x)\cap R \neq N(y)\cap R$. Without loss of generality, assume $r\in N(y)\cap R$ and $rx \notin E(G)$. Then $\{x, y,  v_1, v , r\}$ induces a bull. So (\ref{eq:W-R}) holds. 
%
%
\begin{equation}\label{eq:y-dom-x-in-K}
	\text{\parbox{.85\textwidth}{
     For any two nonadjacent vertices $x$, $y \in W$, if $y$ is adjacent to a vertex $r \in R$, then $N(y) \cap K \subseteq N(x) \cap K$.
    }}
\end{equation}
Suppose for some nonadjacent vertices $x$, $y \in W$, $y$ is adjacent to a vertex $r \in R$ but $N(y) \cap K \nsubseteq N(x) \cap K$. Then there is a vertex $z\in N(y)\cap K$ which is nonadjacent to $x$. But now $\{ z, v_1, y, r,  x\}$ induces either a bull or a house, a contradiction. So (\ref{eq:y-dom-x-in-K}) holds.
\begin{equation}\label{eq:W-neighbors-in-R}
	\text{\parbox{.85\textwidth}{
     Any vertex $w\in W$ must have a neighbor in $R$.
    }}
\end{equation}

Suppose that (\ref{eq:W-neighbors-in-R}) were false. Let $w \in W$ be a vertex with no neighbors in $R$. If $R$ is empty, then $v_1\in K$ is a universal vertex, and so $V(G) \setminus v_1$ is a homogeneous set. So we assume $R$ is nonempty. Let $T\subseteq W$ be the set of vertices that have a neighbor in $R$ and let $K^{'}\subseteq K$ be the set of vertices that have a neighbor in $T$. Clearly, $w \in W\setminus T$ and by (\ref{eq:W-R}), there is no edge with one end point in $T$ and the other end point in $W\setminus T$. By (\ref{eq:y-dom-x-in-K}), we see that every vertex in $W\setminus T$ is complete to $K^{'}$. Hence $(W\setminus T)\cup (K\setminus K^{'}) \cup \{v\}$ is a homogeneous set. Thus (\ref{eq:W-neighbors-in-R}) holds.
\begin{equation}\label{eq:W-connected}
	\text{\parbox{.85\textwidth}{
     $G[W]$ is connected. 
    }}
\end{equation}

Suppose $G[W]$ is disconnected. By  (\ref{eq:K-W-complete}) there is a vertex $w_1 \in W$ complete to $K$. Let $r \in R$ be a neighbor of $w_1$. Let $A\subseteq W$ induce a component in $G[W]$ such that $w_1 \notin A$. Since $w_1$ is complete to $K$, by (\ref{eq:y-dom-x-in-K}), each vertex in $A$ is complete to $K$. Thus, if $|A|\ge 2$, then by (\ref{eq:W-R}), $A$ is a homogeneous set, a contradiction. Now we may suppose $A = \{a\}$. If $a$ has no neighbor in $R$, then $\{a, v\}$ is a homogeneous set, a contradiction. If $a$ has a neighbor in $R$, then $G[N(a)]$ is disconnected and from Lemma \ref{lem:split-vertex}, $G$ has a homogeneous set or $G$ is triangle-free, a contradiction. So (\ref{eq:W-connected}) holds. 

If every vertex in $W$ is complete to $K$, then, by (\ref{eq:K-big}), $K$ is a homogeneous set. Therefore, there is a vertex in $W$ that is not adjacent to a vertex in $K$. Since $G[W]$ is connected and $w_1$ is complete to $K$, we must have two adjacent vertices $w$, $w' \in W$ and vertex $u \in K$ such that $wu \in E(G)$ and $w'u \notin E(G)$. Let $R_1$ be the set of vertices in $R$ that have neighbors in $W$ and let $R_2 = R \setminus R_1$. By (\ref{eq:W-R}), every vertex in $W$ is adjacent to all of $R_1$ and none of $R_2$. 

Suppose that $R_2$ is not empty. Since $G$ is connected, there are vertices $r_1 \in R_1, r_2 \in R_2$ with $r_1 r_2 \in E(G)$. Now the set $\{  w', w, r_1, r_2,  u\}$ induces a bull, a contradiction. So we know $R_2 = \emptyset$. If $|R_1| \geq 2$, then $R_1$ is a homogeneous set. So we conclude $R_1$ contains a single vertex, say $r_1$. Recall $v'$ is the simplicial vertex of $G$ that is nonadjacent to $v$. Since the vertices in $W$ are not simplicial (by (\ref{eq:W-neighbors-in-R})), we must have $v' = r_1$. Thus $W$ is a clique. Thus, $G$ is co-bipartite with the partite sets $K \cup \{v\}$ and $W \cup R$. \qed 

We denote by $3K_1$ the stable set on three vertices.
\begin{corollary}\label{cor:bull-P5-free}
    Let $G$ be a connected (bull, $P_5$)-free graph, then
    \begin{itemize}[noitemsep, topsep=0pt]
        \item[$(i)$] $G$ contains a homogeneous set, or
        \item[$(ii)$] $G$ is 3$K_1$-free, or
        \item[$(iii)$] $G$ is bipartite.
    \end{itemize}
\end{corollary}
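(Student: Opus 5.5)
The plan is to pass to the complement and invoke Theorem~\ref{thm:(bull,house)-free}. The bull is self-complementary and the house is the complement of $P_5$, so for a (bull, $P_5$)-free graph $G$ the complement $\overline{G}$ is (bull, house)-free. The theorem, however, needs $\overline{G}$ to be connected, so first I separate the case where $G$ is not co-connected.

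\emph{Case 1: $\overline{G}$ is disconnected.} Let $C_1,\dots,C_m$ ($m\ge 2$) be the vertex sets of the components of $\overline{G}$. In $G$, each $C_j$ is complete to $V(G)\setminus C_j$. If some $|C_j|\ge 2$, then $1<|C_j|<|V(G)|$ and every vertex outside $C_j$ is complete to $C_j$, so $C_j$ is a homogeneous set of $G$ and (i) holds. Otherwise every $C_j$ is a single vertex, so $G$ is a clique. A clique is $3K_1$-free, so (ii) holds; when $|V(G)|\le 2$ it is also bipartite.

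\emph{Case 2: $\overline{G}$ is connected.} Apply Theorem~\ref{thm:(bull,house)-free} to $\overline{G}$ and translate each outcome back to $G$.
\begin{itemize}
\item If $\overline{G}$ has a homogeneous set $H$, then $H$ is also homogeneous in $G$. The size condition $1<|H|<|V|$ is unchanged, and a vertex complete (respectively, anti-complete) to $H$ in $\overline{G}$ is anti-complete (respectively, complete) to $H$ in $G$. This gives (i).
\item If $\overline{G}$ is triangle-free, then $G$ has no stable set of size three, so $G$ is $3K_1$-free. This gives (ii).
\item If $\overline{G}$ is co-bipartite, then $G=\overline{\overline{G}}$ is bipartite. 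This gives (iii).
\end{itemize}

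The proof has no real obstacle. The only point needing care is the co-connectivity hypothesis, which Case 1 handles. There I must check that a non-singleton co-component really is a proper subset of size at least two, which holds because $m\ge 2$, and that the all-singletons case is covered by (ii). The connectivity of $G$ itself is not actually used.
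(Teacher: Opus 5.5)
Your proof is correct and follows the paper's implicit route: the paper gives no separate proof and obtains the corollary from Theorem~\ref{thm:(bull,house)-free} by complementation, using that the bull is self-complementary and the house is $\overline{P_5}$. Your Case~1 is a worthwhile addition, because the theorem needs $\overline{G}$ to be connected while the corollary only assumes $G$ is connected, and the paper leaves that co-connectivity case unaddressed.
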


\section{Critical (bull, $P_5$)-free graphs}\label{sec:critical}
In this section, we prove Theorem~\ref{thm:critical}. We will need to discuss the idea of modular decomposition. 
A set $M \subseteq V(G)$ is a {\it module} if every vertex in $V(G) \setminus M$ is complete or anti-complete to $M$. A module $M$ is {\it trivial} if $M = \emptyset, |M| =1$ or $M = V(G)$. Thus, non-trivial modules are homogeneous sets.  A module $M$ is {\it strong} if it does not overlap another module. Module $M$ is {\it maximal}  if $M \not= V(G)$ and there is no module $M' \not= V(G)$ with $M \subset M'$. Gallai \cite{Gal1967} showed that if both $G$ and $\overline{G}$ are connected then all maximal modules are strong (see \cite{MafPre2001} for an English translation of Gallai's paper). In this case, the maximal modules of $G$ form a partition of $V(G)$. The {\it join} of two graphs $A,B$ is the graph obtained by taking $A$, $B$ and join each vertex of $A$ to each vertex of $B$ by an edge (so $V(A)$ is complete to $V(B)$). 

A graph is {\it prime} if it contains no non-trivial modules (i.e. no homogeneous sets). Let $G$ be a graph such that both $G$ and $\overline{G}$ are connected. The \emph{skeleton} of $G$, denoted $G_{s}$, is the graph obtained by contracting each maximal module to a single vertex (and removing any loops and multiple edges created). Note that $G_{s}$ is a prime subgraph of $G$, induced by a set containing exactly one vertex from each maximal module in $G$.
A \emph{blowup} of a graph $G$ is a graph obtained by substituting non-empty cliques for some vertices of $G$.

For the remainder of this section, we use the following decomposition for graphs $G$ that are connected and 
co-connected.  Let $S_1$, $S_2$,\dots, $S_m$ be a partition of $V(G)$ into maximal modules. Note that $S_p \cap S_q$ = $\emptyset$ for all $p\neq q$, $p$ and $q \in [m]$. Let $\chi(G) = k$ and let $\chi(G[S_p]) = k_p$, for all $p\in [m]$. We define \emph{clique skeleton} of $G$ as the graph obtained from $G$ by recursively substituting a clique $Q_p$ of size $k_p$ for each maximal module $S_p$ in $G$, where $p \in [m]$. We use $G(Q_1, Q_2,\dots,Q_m)$ to denote the clique skeleton of $G$. Note that the clique skeleton of a graph $G$, say $H$, is a blowup of the skeleton of $G$ and $\chi(G)$ = $\chi(H)$. \\

The result below can be obtained from Lemma 5 in \cite{YuJoo2026}.

\begin{lemma}\label{lem:clq-sklton}
    Suppose $G$ is a $k$-critical graph and co-connected, then the clique skeleton $H$ = $G(Q_1, Q_2,\dots,Q_m)$ is $k$-critical.
\end{lemma}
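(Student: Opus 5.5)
We prove two things: that $\chi(H)=\chi(G)=k$, and that $H-q$ is $(k-1)$-colorable for every vertex $q$ of $H$. Both rest on one observation about color sets on modules, so the proof is just a transfer of colorings between $G$ and $H$.

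\emph{Setup.} If $k\le 1$ the statement is trivial, so assume $k\ge 2$. A $k$-critical graph is then connected, and $G$ is co-connected by hypothesis. By Gallai's theorem the maximal modules $S_1,\dots,S_m$ are strong and partition $V(G)$. For $p\neq q$, either $S_p$ is complete to $S_q$ or $S_p$ is anti-complete to $S_q$. In $H$, $Q_p$ is complete to $Q_q$ exactly when $S_p$ is complete to $S_q$, and each $Q_p$ is a clique of size $k_p=\chi(G[S_p])$.

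\emph{Key observation.} Let $c$ be any proper coloring of a graph whose vertex set is a disjoint union of modules $T_1,\dots,T_m$, with the same complete/anti-complete pattern as the $S_p$. Let $C_r=c(T_r)$ be the set of colors used on $T_r$. If $T_r$ is complete to $T_s$, then $C_r\cap C_s=\emptyset$. Moreover $|C_r|\ge \chi(G[T_r])$.

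Consequently, from such a coloring we can color the corresponding clique blowup as follows: give the clique replacing $T_r$ any $|Q_r|$ distinct colors from $C_r$. This is possible whenever $|Q_r|\le |C_r|$, and the result is proper because adjacent cliques receive colors from disjoint sets.

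\emph{Equality of chromatic numbers.}
\begin{itemize}
\item $\chi(H)\le\chi(G)$: apply the observation to an optimal coloring of $G$ with $T_r=S_r$. This works since $|Q_r|=k_r\le |C_r|$.
\item $\chi(G)\le\chi(H)$: start from an optimal coloring of $H$. Color each $G[S_r]$ optimally with $k_r$ colors, using exactly the colors on $Q_r$. Since $S_r$ is complete to $S_s$ exactly when $Q_r$ is complete to $Q_s$, the combined coloring of $G$ is proper.
\end{itemize}
Hence $\chi(H)=k$.

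\emph{Criticality.} Let $q\in Q_p$ and pick any $s\in S_p$. Since $G$ is $k$-critical, $G-s$ has a proper $(k-1)$-coloring $c$. The sets $S_r$ ($r\ne p$) and $S_p\setminus\{s\}$ are still modules of $G-s$ with the same adjacency pattern. We need to check the size condition for each clique of $H-q$.
\begin{itemize}
\item For $r\neq p$: $|c(S_r)|\ge k_r=|Q_r|$.
\item For $p$: $|c(S_p\setminus\{s\})|\ge \chi(G[S_p]-s)\ge k_p-1=|Q_p\setminus\{q\}|$. If $S_p=\{s\}$, then $Q_p\setminus\{q\}$ is empty and there is nothing to do.
\end{itemize}
The key observation therefore yields a proper $(k-1)$-coloring of $H-q$. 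Thus every proper induced subgraph of $H$ is $(k-1)$-colorable and $H$ is $k$-critical.

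The main point needing care is the module structure: we need the $S_p$ to be pairwise complete or anti-complete (guaranteed by strongness via Gallai), and we need that deleting $s$ lowers $\chi(G[S_p])$ by at most one.
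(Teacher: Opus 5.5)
Your proof is correct, but it is organized differently from the paper's.

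The paper argues in two steps and only sketches them:
\begin{enumerate}
\item every module $S$ of a $k$-critical graph induces a $\chi(G[S])$-critical subgraph;
\item replacing a module by another graph that is critical with the same chromatic number (here a clique on $k_p$ vertices) preserves $k$-criticality, and this is applied one maximal module at a time.
\end{enumerate}

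You instead transfer colorings directly between $G$ and $H$ and substitute all maximal modules simultaneously. The only fact about the modules you use is $\chi(G[S_p]-s)\ge k_p-1$, and criticality of $G[S_p]$ never enters. The core mechanism is the same, since the paper's substitution step is justified by exactly your color-class argument. Your version is self-contained and supplies the details the paper calls ``not difficult to see''.

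Your argument is also slightly more general. It works for any partition of $V(G)$ into modules, because pairwise disjoint modules are automatically complete or anti-complete to each other. So strongness is not what you need at that point; Gallai's theorem matters only to ensure the maximal modules form a partition.

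What the paper's route buys is the by-product that every module of a $k$-critical graph is itself critical. This fact is used again in the proof of Theorem~\ref{thm:clq-sklton} (``each module in $G$ is $b$-critical''), and your argument does not produce it.
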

\begin{proof}
    Assume the hypothesis. For a $k$-critical graph $G$ it is not difficult to see that for each module $S$, $G[S]$ must be $\chi(G[S])$-critical. Thus, recursively substituting a clique of size $\chi(G[S])$, which is also $\chi(G[S])$-critical, for each maximal module $S$ in $G$ results in a $k$-critical graph.
\end{proof}

 A \emph{$k$-critical blowup} of a graph $G$ is a blowup of $G$ which is $k$-critical. Note that for some graphs $G$ and for some positive integers $k$, for example $C_4$ and $k$ = 3, there is no $k$-critical blowup of $G$.

\begin{theorem}\label{thm:clq-sklton}
    Suppose $\mathcal{G}$ is a hereditary class of graphs. Let $k\ge 1$ be a positive integer. Suppose, for all positive integer $1\le i \le k$, if the number of vertices in any $i$-critical blowup of every prime graph in $\mathcal{G}$ is bounded by a constant, then the number of vertices in each $j$-critical graph in $\mathcal{G}$ is bounded by a constant, where $1\le j\le k$.
\end{theorem}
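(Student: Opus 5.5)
I will prove the statement by induction on $j$, showing for each $j \le k$ that there is a constant $c_j$ with $|V(G)| \le c_j$ for every $j$-critical graph $G \in \mathcal{G}$. The hypothesis is that, for each $i\le k$, there is a constant $b_i$ bounding the number of vertices of every $i$-critical blowup of every prime graph in $\mathcal{G}$. For $j=1,2$ the only $j$-critical graphs are $K_1$ and $K_2$, so the base case is immediate. For the inductive step, let $G \in \mathcal{G}$ be $j$-critical. A critical graph is connected, so it suffices to handle two cases: $G$ co-connected, or $\overline{G}$ disconnected.

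\textbf{Case 1: $\overline{G}$ is disconnected.} Then $G$ is the join of two nonempty graphs $G_1, G_2$ (unions of co-components). We have $\chi(G)=\chi(G_1)+\chi(G_2)$. Criticality of $G$ forces each $G_t$ to be $\chi(G_t)$-critical: if some vertex $x\in V(G_t)$ could be deleted without lowering $\chi(G_t)$, then $\chi(G-x)=\chi(G)$, a contradiction. Each $\chi(G_t)<j$, and $G_t\in\mathcal{G}$ since the class is hereditary. By induction $|V(G)|\le 2\max_{i<j}c_i$.

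\textbf{Case 2: $G$ is connected and co-connected.} Let $S_1,\dots,S_m$ be the maximal modules, which partition $V(G)$ by Gallai's theorem.
\begin{itemize}
\item By Lemma~\ref{lem:clq-sklton}, the clique skeleton $H=G(Q_1,\dots,Q_m)$ is $j$-critical.
\item $H$ is a blowup of the skeleton $G_s$, which is a prime induced subgraph of $G$ and so lies in $\mathcal{G}$.
\item By hypothesis $|V(H)|\le b_j$, so $m\le b_j$.
\end{itemize}
It remains to bound each $|S_p|$. As noted in the proof of Lemma~\ref{lem:clq-sklton}, $G[S_p]$ is $k_p$-critical, where $k_p=\chi(G[S_p])$. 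Moreover $k_p<j$. Indeed, $G_s$ is prime with at least two vertices, hence connected, so $S_p$ is complete to some other module $S_q$ and $\chi(G)\ge k_p+k_q>k_p$. Since $G[S_p]\in\mathcal{G}$, induction gives $|S_p|\le c_{k_p}$. Hence $|V(G)|\le b_j\max_{i<j}c_i$.

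Setting $c_j=\max\bigl(2\max_{i<j}c_i,\; b_j\max_{i<j}c_i\bigr)$ completes the induction.

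\textbf{Main obstacle.} The delicate point is the criticality claim for modules, that $G[S_p]$ is $\chi(G[S_p])$-critical. I would argue it directly. Suppose deleting some vertex $x\in S_p$ left $\chi(G[S_p]-x)=k_p$. Take a $(j-1)$-coloring of $G-x$; it uses at least $k_p$ colors on $S_p\setminus\{x\}$. Recolor $S_p$ by any $k_p$-coloring of $G[S_p]$ drawn from those same colors. This is valid because every vertex outside $S_p$ is either complete to $S_p$, hence already avoids all those colors, or anticomplete to $S_p$. The result is a $(j-1)$-coloring of $G$, a contradiction.

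It is also worth noting that the hypothesis is applied only to the prime graph $G_s$, not to the clique blowup $H$ itself. So I need not worry that $H$ might lie outside $\mathcal{G}$.
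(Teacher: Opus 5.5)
Your proof is correct and follows essentially the same route as the paper. You split into the join case and the connected, co-connected case, use Lemma~\ref{lem:clq-sklton} with the hypothesis on the prime skeleton $G_s$ to bound the number $m$ of maximal modules, and bound each module by the smaller-chromatic-number case; the paper phrases this as a minimal counterexample rather than an induction, and you also prove two points it only asserts, namely that each $G[S_p]$ is $\chi(G[S_p])$-critical and that $\chi(G[S_p])<j$.
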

\begin{proof}
    Assume the hypotheses. The proof is by contradiction. Suppose the Theorem was false. Let  $j\le k$ be the smallest integer such that there exists a $j$-critical graph $G \in \mathcal{G}$ whose order is not bounded by a constant. Then obviously $G$ is connected. Let $\mathcal{C}(\ell)$ denote the maximum number of vertices in an $\ell$-critical graph in $\mathcal{G}$. Suppose $G$ is a join of two graphs $G_1$ and $G_2$, then clearly $G_1$ is $j_1$-critical and $G_2$ is $j_2$-critical for some positive integers $j_1$, $j_2$ such that $j_1$ + $j_2$ = $j$. Then $|V(G)|\le 2\times max\{\mathcal{C}(\ell)\mid 1\le \ell \le j-1\}$, and so $|V(G)|$ is bounded, a contradiction.

    So we assume $G$ is not a join of two graphs, that is, $G$ is co-connected. Then $V(G)$ can be uniquely partitioned into maximal modules $S_1$, $S_2$,\dots, $S_m$. Let $H$ = $G(Q_1, Q_2,\dots,Q_m)$ be the clique skeleton of $G$ and $G_s$ be the skeleton of $G$. Since $H$ is a blowup of the prime graph $G_s$ in $\mathcal{G}$, from Lemma \ref{lem:clq-sklton} and from our hypothesis, $|V(H)|$ is bounded and hence the number of maximal modules in $H$, say $m$, is also bounded. Note that $G$ also has $m$ maximal modules. Since each module in $G$ is $b$-critical for some $b\le j-1$, we obtain that $|V(G)| \le m\times max\{\mathcal{C}(\ell)\mid 1\le \ell \le j-1\}$, and so $|V(G)|$ is bounded, a contradiction.
\end{proof}

Note that a blowup of a perfect graph is perfect and thus, from Theorem \ref{thm:clq-sklton}, to prove that the number of $k$-critical graphs in $\mathcal{G}$ is finite, it is sufficient to prove that the number of $k$-critical blowup of non perfect prime graphs in $\mathcal{G}$ is finite.

\noindent {\it Proof of Theorem \ref{thm:critical}.}
    From Theorem \ref{thm:clq-sklton}, it is sufficient to prove that the number of vertices in each $k$-critical blowup of every prime ($P_5$, bull)-free graph is bounded by a constant. From Corollary \ref{cor:bull-P5-free}, every prime ($P_5$, bull)-free graph is either bipartite or 3$K_1$-free. The only $k$-critical blowup of any prime bipartite graph is the clique on $k$ vertices. Since the blowup of any 3$K_1$-free graph is 3$K_1$-free, the bound on the number of vertices in a $k$-critical blowup of any prime 3$K_1$-free graph is bounded by the Ramsey number $R(3,k+1)$. \qed

\section{Perfect divisibility of ($P_5$, bull)-free graphs}\label{sec:divisibility}
Chudnovsky and Sivaraman \cite{ChuSiv2019} proved the following theorem. 
\begin{theorem}[Theorem 3.7 in \cite{ChuSiv2019}]\label{thm:perfectlydivisible} 
($P_5$, bull)-free graphs are perfectly divisible. 
\end{theorem}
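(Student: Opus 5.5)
The plan is to prove directly the form of perfect divisibility used by Chudnovsky and Sivaraman. For every ($P_5$, bull)-free graph $G$ with at least one edge, we show that $V(G)$ splits into $A,B$ with $G[A]$ perfect and $\omega(G[B])<\omega(G)$. The class is hereditary, so it suffices to do this for $G$ itself. We argue by induction on $|V(G)|$ (graphs without edges being trivial), and the engine is Corollary~\ref{cor:bull-P5-free}.

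\emph{Step 1: disconnected and non-co-connected cases.} If $G$ is disconnected, we partition each component by induction (an edgeless component goes entirely into $A$) and take unions. A disjoint union of perfect graphs is perfect, and the clique number of the union of the $B$-parts is the maximum over components, hence smaller than $\omega(G)$. If $\overline{G}$ is disconnected, $G$ is the join of smaller graphs $G_1,G_2$. We take induced partitions $(A_i,B_i)$ (if $G_i$ is edgeless, $A_i=V(G_i)$ and $B_i=\emptyset$) and set $A=A_1\cup A_2$, $B=B_1\cup B_2$. Then $G[A]$ is perfect, being a join of perfect graphs. At least one $G_i$ has an edge, since $\omega(G)\ge 2$, so $\omega(G[B])=\omega(G[B_1])+\omega(G[B_2])<\omega(G_1)+\omega(G_2)=\omega(G)$.

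\emph{Step 2: a homogeneous set that is not a clique.} Let $H$ be such a set. Form $G_0$ by replacing $H$ with a clique $Q$ of size $\omega(H)$, complete to exactly the neighbours of $H$. Then $G_0$ is again ($P_5$, bull)-free, since an induced $P_5$ or bull uses at most one vertex of a module that is a clique, and is otherwise an induced subgraph of $G$. Also $|V(G_0)|<|V(G)|$ and $\omega(G_0)=\omega(G)$. We take a partition $(A_0,B_0)$ of $G_0$ and a partition $(A_H,B_H)$ of $G[H]$ by induction.
\begin{itemize}
\item If $Q\subseteq B_0$, put $A=A_0$ and $B=(B_0\setminus Q)\cup H$. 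A clique of $G[B]$ meeting $H$ has the form $K\cup K'$ with $K\subseteq N(H)\cap B_0$ and $K'\subseteq H$. Since $K\cup Q$ is a clique of $G_0[B_0]$, we get $|K|+\omega(H)<\omega(G)$.
\item Otherwise choose $q\in Q\cap A_0$. Put $A=(A_0\setminus Q)\cup A_H$ and $B=(B_0\setminus Q)\cup B_H$. Here $G[A]$ arises from the perfect graph $G_0[(A_0\setminus Q)\cup\{q\}]$ by substituting the perfect graph $G[A_H]$ for $q$, so it is perfect by Lov\'asz's replacement lemma. A clique of $G[B]$ meeting $H$ is $K\cup K'$ with $K\cup Q$ a clique of $G_0$, so $|K|\le\omega(G)-\omega(H)$, while $|K'|\le \omega(G[B_H])<\omega(H)$.
\end{itemize}

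\emph{Step 3: every homogeneous set is a clique.} Now $G$ is connected and co-connected, and its maximal modules are cliques. Hence $G$ is a clique blowup of its skeleton $G_s$, which is prime, connected and ($P_5$, bull)-free. By Corollary~\ref{cor:bull-P5-free}, $G_s$ is bipartite or $3K_1$-free.
\begin{itemize}
\item If $G_s$ is bipartite, any clique blowup of it is perfect, since it contains no odd hole or odd antihole. Take $A=V(G)$ and $B=\emptyset$.
\item If $G_s$ is $3K_1$-free, so is $G$, because true twins cannot both lie in a stable set. Pick any vertex $v$, and let $M$ be the set of non-neighbours of $v$ other than $v$. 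By $3K_1$-freeness $M$ is a clique. Set $A=\{v\}\cup M$, which induces a clique plus an isolated vertex and is therefore perfect. Set $B=N(v)$, which satisfies $\omega(G[B])\le\omega(G)-1$.
\end{itemize}

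The main obstacle is Step 2. Naively contracting $H$ to a single vertex fails when that vertex lands in $B_0$, because $\omega(G_0)$ may then drop below $\omega(G)$ and the clique bound is lost. Substituting a clique of size $\omega(H)$ is what preserves $\omega$ and makes both cases close. This substitution only shrinks the graph when $H$ is not a clique, which is exactly why Step 3 must separately treat graphs whose modules are all cliques, using the structure theorem.
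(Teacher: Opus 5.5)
Your proof is correct, but it is organized differently from the paper's.

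\textbf{The paper's route.} It argues by minimal counterexample. A minimally non-perfectly divisible $G$ has no homogeneous set and contains a $3K_1$; these are Lemmas~\ref{lem:homogeneous} and~\ref{lem:3K1}, quoted from \cite{Hoa2018} without proof. So by Corollary~\ref{cor:bull-P5-free}, $G$ is bipartite, hence perfect, a contradiction.

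\textbf{Your route.} You run a direct induction and prove the reductions yourself. Step~1 handles disjoint unions and joins. Step~2 handles only homogeneous sets that are \emph{not} cliques, by substituting a clique of size $\omega(H)$. This is legitimate because neither $P_5$ nor the bull has a pair of true twins, and it preserves $\omega$ while strictly shrinking the graph.

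\textbf{What each buys.} You never have to reduce a clique module. That is the awkward case for an unweighted homogeneous-set lemma, since shrinking a clique module can lower $\omega$. You sidestep it because both outcomes of the corollary are stable under clique blowups. A clique blowup of a bipartite graph is perfect. A clique blowup of a $3K_1$-free graph is $3K_1$-free, so the partition into $\{v\}\cup M$ and $N(v)$ applies, which is exactly the content of Lemma~\ref{lem:3K1}. The paper's proof is three lines but outsources the reductions; yours is longer but self-contained and constructive.

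\textbf{One harmless slip.} In Step~1, ``at least one $G_i$ has an edge, since $\omega(G)\ge 2$'' is false: $C_4$ is the join of two copies of $2K_1$. You do not need it, though. With your convention $B_i=\emptyset$ for edgeless $G_i$, you already have $\omega(G[B_i])<\omega(G_i)$ for both $i$.
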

Using our structural result we will give a short proof of this theorem. Let us call a graph $G$ {\it minimally non-perfectly divisible} (MNPD, for short) if $G$ is not perfectly divisible but every proper induced subgraph of $G$ is. 
\begin{lemma}[Lemma 7.2 in \cite{Hoa2018}]\label{lem:homogeneous}
    A MNPD graph cannot contain a homogeneous set.
\end{lemma}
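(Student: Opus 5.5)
The plan is to argue by contradiction and build a good partition of the whole graph from partitions of two smaller graphs. Perfect divisibility is taken in the sense of Hoàng \cite{Hoa2018}: every induced subgraph $J$ with at least one edge has a partition $(A,B)$ of $V(J)$ with $G[A]$ perfect and $\omega(G[B])<\omega(J)$. Let $G$ be MNPD and suppose $H$ is a homogeneous set of $G$. Write $N$ for the set of vertices of $V(G)\setminus H$ complete to $H$. Every proper induced subgraph of $G$ is perfectly divisible, so it suffices to partition $V(G)$ itself into $(A,B)$ with $G[A]$ perfect and $\omega(G[B])<\omega(G)$. The two tools are Lovász's substitution (replication) lemma, which says that substituting a perfect graph for a vertex of a perfect graph gives a perfect graph, and the elementary bound $|Q|+\omega(G[H])\le\omega(G)$ for every clique $Q\subseteq N$.

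\emph{Main case: $H$ is not a clique.} Fix a maximum clique $K_H$ of $G[H]$ and let $G^*=G-(H\setminus K_H)$. This is a proper induced subgraph, and $\omega(G^*)=\omega(G)$, since any clique of $G$ meets $H$ in a clique that can be replaced by $K_H$. Take a partition $(A^*,B^*)$ of $G^*$ with $G^*[A^*]$ perfect and $\omega(G^*[B^*])<\omega(G)$.

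Since $G[H]$ is a proper induced subgraph, it is perfectly divisible (if $H$ has no edges, take $P_1=H$). So $H=P_1\cup R_1$ with $G[P_1]$ perfect and $\omega(G[R_1])\le\omega(G[H])-1$.

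\emph{Subcase $K_H\cap A^*\neq\emptyset$.} Set $A=(A^*\setminus K_H)\cup P_1$ and $B=(B^*\setminus K_H)\cup R_1$.
\begin{itemize}[noitemsep, topsep=0pt]
\item $G[A]$ is obtained from the perfect graph $G^*[(A^*\setminus K_H)\cup\{k\}]$, for some $k\in K_H\cap A^*$, by substituting $G[P_1]$ for $k$. Here we use that $H$ is a module, so the vertices of $P_1$ see exactly $N\cap A^*$ outside $H$, just as $k$ does. Hence $G[A]$ is perfect.
\item A clique of $G[B]$ avoiding $H$ lies in $B^*$, so it has fewer than $\omega(G)$ vertices.
\item A clique of $G[B]$ meeting $H$ has the form $Q\cup K'$ with $Q\subseteq N$ and $|K'|\le\omega(G[H])-1$. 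Thus its size is at most $|Q|+\omega(G[H])-1\le\omega(G)-1$.
\end{itemize}

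\emph{Subcase $K_H\subseteq B^*$.} Set $A=A^*$ and $B=(B^*\setminus K_H)\cup H$. A clique $Q\cup K'$ of $G[B]$ meeting $H$ has $Q\subseteq B^*\cap N$, so $Q\cup K_H$ is a clique of $G^*[B^*]$. Hence $|Q|+|K'|\le|Q|+|K_H|\le\omega(G)-1$.

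\emph{Remaining case: $H$ is a clique.} I expect this to be the main obstacle, because $G^*=G$ and the reduction above is unavailable. The plan is to pass to the maximal clique module $H$ containing the given twins and delete one vertex $h\in H$. Take a partition $(A',B')$ of $G-h$.
\begin{itemize}[noitemsep, topsep=0pt]
\item If some vertex of $H\setminus\{h\}$ lies in $A'$, add $h$ to $A'$. This adds a true twin, which preserves perfection.
\item Otherwise $H\setminus\{h\}\subseteq B'$. Adding $h$ to $B'$ can create a clique of size $\omega(G)$, which must contain all of $H$.
\end{itemize}
In the second situation, first try the reverse move: put $h$ into $A'$ as an isolated twin of nothing, i.e.\ replace $A'$ by $A'\cup\{h\}$. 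Perfection is then threatened only by an odd hole or antihole through $h$ whose other vertices lie in $A'\cap(N\cup M)$, where $M$ is the set anticomplete to $H$. I would try to exclude these by comparing with $G-h'$ for $h'\in H\setminus\{h\}$, which is isomorphic to $G-h$ since $h$ and $h'$ are twins. Failing that, I would instead choose the partition of $G-h$ via iterated divisibility, peeling off perfect sets one at a time so as to force some vertex of $H\setminus\{h\}$ into the perfect side.
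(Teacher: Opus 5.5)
Your argument for a homogeneous set $H$ that is \emph{not} a clique is correct. $G^*=G-(H\setminus K_H)$ is a proper induced subgraph with $\omega(G^*)=\omega(G)$, and both subcases check out: substitution when $K_H\cap A^*\neq\emptyset$, and the bound $|Q|+|K_H|\le\omega(G^*[B^*])$ when $K_H\subseteq B^*$.

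The statement, however, covers every homogeneous set, and for a clique $H$ you have suggestions rather than a proof. None of them works as stated. Take a good partition $(A',B')$ of $G-h$. The bad case is when $H\setminus\{h\}\subseteq B'$, $\omega(G-h)=\omega(G)$, and $G[B']$ has an $(\omega(G)-1)$-clique containing $H\setminus\{h\}$. Then $h$ cannot join $B'$. On the other hand, $G[A'\cup\{h\}]$ may contain an odd hole or antihole through $h$, because $(A',B')$ was chosen with no regard to $h$. Comparing with $G-h'$ gives nothing new: fixing $V(G)\setminus\{h,h'\}$ and sending $h'\mapsto h$ is an isomorphism $G-h\to G-h'$, and it carries every partition and every obstruction along with it. The peeling idea has nothing that forces a vertex of $H\setminus\{h\}$ into $A'$; it may well be that every good partition of $G-h$ puts all of $H\setminus\{h\}$ into $B'$.

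The underlying problem is quantitative. Contract $H$ to one vertex, i.e.\ look at $G'=G-(H\setminus\{h\})$. You need a partition $(A,B)$ of $V(G')$ with $G'[A]$ perfect such that cliques of $G'[B]$ stay below $\omega(G)$ when $h$ is counted with weight $|H|$. Your $K_H$ trick is exactly a way to realise the weight $\omega(G[H])$ inside an unweighted proper induced subgraph, and it breaks down precisely when $H$ is a clique, since then nothing is deleted. Minimality only supplies such partitions of $V(G')$ with weight at most $|H|-1$ on $h$ (these are the good partitions of $G-h$), which is one unit short. In fact the clique case of the lemma is equivalent to the assertion that adding a true twin to a perfectly divisible graph keeps it perfectly divisible, and your proposal gives no handle on that.

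The way around this is to carry weights, in the sense of the weight functions mentioned at the end of Section~\ref{sec:divisibility}. Take a minimal counterexample among weighted graphs, using the weighted clique number $\omega_w$. Contract an arbitrary homogeneous set $H$ to a single vertex of weight $\omega_w(G[H])$. This is a proper induced subgraph whether or not $H$ is a clique, and your two subcases then go through verbatim with $K_H$ replaced by that vertex. This proves the weighted form of the lemma. That suffices for the proof of Theorem~\ref{thm:perfectlydivisible}: the $3K_1$ lemma also holds with weights (put a vertex together with its non-neighbours, which form a clique, on the perfect side), and perfect weight divisibility implies perfect divisibility. It is not, however, a proof of the unweighted statement as written.
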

\begin{lemma}[Lemma 7.3 in \cite{Hoa2018}]\label{lem:3K1}
    A MNPD graph must contain a $3K_1$.
\end{lemma}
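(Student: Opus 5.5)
Since $G$ is MNPD, it is not perfectly divisible, but every proper induced subgraph of $G$ is. Here perfect divisibility is taken in Ho\`ang's sense, which is the property an MNPD graph is minimal for failing: every induced subgraph $H$ with at least one edge admits a partition $V(H)=A\cup B$ with $H[A]$ perfect and $\omega(H[B])<\omega(H)$. The failure must therefore occur at $H=G$ itself. So $G$ has at least one edge, and $V(G)$ admits no partition into a set inducing a perfect graph and a set of smaller clique number. I argue by contradiction: assume $G$ is $3K_1$-free, and exhibit such a partition of $V(G)$ directly.

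\emph{Choice of partition.} Pick any vertex $v$ of $G$. Let $M(v)=V(G)\setminus(N(v)\cup\{v\})$ be its non-neighbourhood, and put $A=\{v\}\cup M(v)$ and $B=N(v)$.

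\emph{$G[A]$ is perfect.} Any two nonadjacent vertices $x,y\in M(v)$ would form a stable set $\{v,x,y\}$, a $3K_1$. Hence $M(v)$ is a clique. As $v$ is anticomplete to $M(v)$, the graph $G[A]$ is a clique together with one isolated vertex. Every induced subgraph of it is again a disjoint union of at most one clique and an isolated vertex, so its chromatic number equals its clique number. Thus $G[A]$ is perfect.

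\emph{$\omega(G[B])<\omega(G)$.} Suppose $N(v)$ contained a clique $Q$ with $|Q|=\omega(G)$. Then $Q\cup\{v\}$ would be a clique of size $\omega(G)+1$, which is impossible. Hence $\omega(G[N(v)])\le\omega(G)-1$.

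This gives the required partition of $V(G)$. Together with minimality, which handles every proper induced subgraph, it shows that $G$ is perfectly divisible, a contradiction. Hence $G$ contains a $3K_1$.

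The only point needing care is using the correct notion of perfect divisibility: one part perfect and the other of strictly smaller clique number. With that notion, the non-neighbourhood of a single vertex in a $3K_1$-free graph does all the work. I do not expect a genuine obstacle. The same argument in fact shows directly that every $3K_1$-free graph is perfectly divisible, since $3K_1$-freeness is hereditary and the argument can be applied to each induced subgraph with an edge.
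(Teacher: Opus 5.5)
Your proof is correct. The paper gives no argument of its own for this lemma (it only cites Lemma 7.3 of Ho\`ang 2018), and your partition of $V(G)$ into $\{v\}\cup M(v)$, a clique plus an isolated vertex, and $N(v)$, whose clique number is below $\omega(G)$, is the standard argument behind that citation. You were also right to insist on Ho\`ang's definition, with one part perfect and the other of smaller clique number: the definition stated in this paper's abstract and introduction is in effect $2$-divisibility, and under that reading the lemma fails, since $C_5$ is $3K_1$-free and minimally non-$2$-divisible.
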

\noindent {\it Proof of Theorem \ref{thm:perfectlydivisible}}. We will prove by contradiction. 
Let $G$ be a ($P_5$, bull)-free graph. Suppose that $G$ is not perfectly divisible. Then $G$ must contain an induced MNPD graph. Thus we may assume $G$ is MNPD. By Lemma \ref{lem:homogeneous}, $G$ cannot contain a homogeneous set. By Lemma \ref{lem:3K1}, $G$ contains a $3K_1$. By Corollary \ref{cor:bull-P5-free}, $G$ is bipartite. But bipartite graphs are perfect and therefore perfectly divisible. \qed

We note the same proof shows a slightly stronger statement:  ($P_5$, bull)-free graphs are perfectly divisible for all weight functions (see \cite{Hoa2018} for the definition of weight functions.)

\section{Conclusions}\label{sec:conclusions}
Both the classes of $P_5$-free graphs and bull-free graphs are being intensively studied. It is known that the number of $k$-critical $P_5$-free graphs, for $ k \geq 5$, is infinite. In this paper, we showed that the number of $k$-critical ($P_5$, bull)-free graphs is finite for every $k$. Recently Yu, Jooken, Goedgebeur, and Huang \cite{YuJoo2026} proved that the number of 5-critical ($P_6$, bull)-free graphs is finite.  It would be interesting to solve the Finiteness Problem for 6-critical ($P_6$, bull)-free graphs.

\section{Acknowledgement}
 The author M. B. is supported by ANRF-NPDF, Govt. of India (No. PDF/2025/005208). ORCID: 0000-0002-3153-2339. The author C. T. H. is supported by individual NSERC Discovery Development Grant no. DDG-2024-00015. The majority of this work was done during when the author M. B. was a postdoc at Wilfrid Laurier University, the position was supported by the Natural Sciences and Engineering Research Council of Canada Grant to the author C. T. H. Part of this work was done when the author C. T. H. visited the Indian Statistical Institute in Chennai and thank T. Karthick for the hospitality. The authors thank Ben Cameron and Iain Beaton for pointing out an error in a previous version of this paper.

\end{document}